\font\cyr=wncyr10 scaled \magstep1
\def\Sh{\text{\cyr Sh}}
\DeclareMathOperator{\cok}{coker}
\newcommand{\Nr}{\text{Nr}}
\newcommand{\fp}{\mathfrak{p}}
\newcommand{\fq}{\mathfrak{q}}
\newcommand{\Sp}{\text{Spec} \,}
\newcommand{\disc}{\text{disc}}
\DeclareMathOperator{\Res}{Res}
\chardef\bslash=`\\ 
\newtheorem{theorem}{Theorem}[section]
\newtheorem{thm}[theorem]{Theorem}
\newtheorem{prop}[theorem]{Proposition}
\newtheorem{lem}[theorem]{Lemma}
\newtheorem{cor}[theorem]{Corollary}
\numberwithin{equation}{section}
\newtheorem*{maintheorem*}{Main Theorem}
\newtheorem*{repeatprop}{Proposition}
\theoremstyle{definition}
\newtheorem{definition}[theorem]{Definition}
\newtheorem{remark}[theorem]{Remark}
\newtheorem{example}[theorem]{Example}
\newcommand{\surj}{\twoheadrightarrow}
\newcommand{\CO}{\mathcal{O}}
\newcommand{\Ob}{\un{\mathbf{O}}}
\newcommand{\OV}{\un{\mathbf{O}}_q}
\newcommand{\OVdm}{\un{\mathbf{O}}_{d,m}}
\newcommand{\Od}{\un{\mathbf{O}}_d}
\newcommand{\wt}{\widetilde}
\newcommand{\mk}{\medskip}
\renewcommand{\sectionmark}[1]{}
\renewcommand{\Im}{\operatorname{Im}}
\newcommand{\la}{\langle}
\newcommand{\ra}{\rangle}
\newcommand{\N}{\un{N}}
\newcommand{\diag}{\text{diag}}
\newcommand{\cl}{\mathfrak{cl}}
\newcommand{\ve}{\varepsilon}
\newcommand{\iy}{\infty}
\newcommand{\bk}{\bigskip}
\newcommand{\fc}{\frac}
\newcommand{\G}{\Gamma}
\newcommand{\g}{\gamma}
\newcommand{\Pic}{\mathrm{Pic~}}
\newcommand{\F}{\mathcal{F}}
\newcommand{\dl}{\delta}
\newcommand{\Dl}{\Delta}
\newcommand{\om}{\omega}
\newcommand{\Om}{\Omega}
\newcommand{\ov}{\overline}
\newcommand{\un}{\underline}
\newcommand{\BG}{\mathbb{G}}
\newcommand{\BF}{\mathbb{F}}
\newcommand{\BQ}{\mathbb{Q}}
\newcommand{\Q}{\mathbb{Q}}
\newcommand{\Z}{\mathbb{Z}}
\newcommand{\BR}{\mathbb{R}}
\renewcommand{\a}{\alpha}
\newcommand{\et}{\text{\'et}}
\newcommand{\fl}{\mathrm{fl}}
\newcommand{\p}{\varphi}
\newcommand{\BZ}{\mathbb{Z}}
\newcommand{\A}{\mathbb{A}}
\begin{document}

\date{}


\baselineskip 20pt
\setcounter{equation}{0}
\pagestyle{plain}
\pagenumbering{arabic}

\title{On the flat cohomology of binary norm forms}

\author{Rony A.~Bitan and Michael M.~Schein}
\thanks{This work was supported by grant 1246/2014 from the Germany-Israel Foundation.  The first author was also supported by a Chateaubriand Fellowship of the Embassy of France in Israel, 2016.}

\begin{abstract}
Let $\CO$ be an order of index $m$ in the maximal order of a quadratic number field $k=\Q(\sqrt{d})$.
Let $\Ob_{d,m}$ be the orthogonal $\Z$-group of the associated norm form $q_{d,m}$.  
We describe the structure of the pointed set $H^1_\fl(\Z,\Ob_{d,m})$, 
which classifies quadratic forms isomorphic (properly or improperly) to $q_d$ in the flat topology. 
Gauss classified quadratic forms of fundamental discriminant and showed that 
the composition of any binary $\Z$-form of discriminant $\Dl_k$ with itself belongs to the principal genus.  
Using cohomological language, we extend these results to forms of certain non-fundamental discriminants.     
\end{abstract}

\maketitle{}

\markright{Flat cohomology of binary norm forms}

\section{Introduction} 
Let $q$ be an integral binary quadratic form, namely a map $q: \Z^2 \to \Z$ represented by a symmetric matrix  
$B_q$ 
satisfying $q(x,y) = (x,y) B_q (x,y)^t = ax^2 + bxy + cy^2$, where $a,b,c \in \Z$. 
For brevity we write $q=(a,b,c)$. 
{We will underline schemes defined over $\Sp \Z$, omitting the underline for their generic fibers.}    
Any change of variables by $A \in \un{\textbf{GL}}_2(\Z)$ gives rise to an isomorphic form $q'=q \circ A$  
represented by the congruent matrix $B_{q'} = A B_q A^t$. 
In particular, if $q = q \circ A$, then $A$ is said to be an \emph{isometry} of $q$.    
It is called \emph{proper} if $A \in \un{\textbf{SL}}_2(\Z)$. 
The \emph{discriminant} of $q$ is the integer $\Dl(q) = b^2-4ac = -4\det(B_q)$;  
it is independent of the choice of the basis of $\Z^2$ since $\det(A)=\pm 1$ for any $A \in \un{\textbf{GL}}_2(\Z)$. 
Given an integer $n \in \Z$, a natural and very classical problem is to describe the set of equivalence classes
\begin{equation} \label{equ:def.cl.plus}
 \cl^+(n) := \{ q : \Dl(q)=n \} \big / \un{\textbf{SL}}_2(\Z). 
\end{equation}

Consider the quadratic number field $k = \Q(\sqrt{d})$, where $d \not\in \{ 0, 1 \}$ is a square-free integer.  
Denote its discriminant by $\Delta_k$ and the norm map by $\Nr:k^\times \to \Q^\times$.   
Fixing an integral basis $\{ 1, \omega \}$ of the ring of integers $\CO_k$, associate to $k$ the \emph{norm $\Z$-form} $q_d(x,y) := \Nr(x + y \om)$. 
Then $\Dl(q_d) = \Dl_k$ is a {fundamental discriminant}. 
As $\CO_k$ is a Dedekind domain, its {narrow ideal class group} $I_k/P^+_k$ coincides with its {(narrow) Picard group} $\mathrm{Pic}^+(\CO_k)$.    
If $d < 0$, write $\cl^+(\Dl(q_d))'$ for the restriction of $\cl^+(\Dl(q_d))$ to only \emph{positive} definite forms, 
namely those for which $a,c > 0$.  If $d > 0$, define $\cl^+(\Dl(q_d))' = \cl^+(\Dl(q_d))$.
Gauss, in his {\emph{Disquisitiones Arithmeticae}}~\cite{Gau}, proved  
the following bijection of pointed sets: 
{$$ \cl^+(\Dl(q_d))' \cong \mathrm{Pic}^+(\CO_k) \ : \ \left[ (a,b,c) \right] \mapsto 
\left [ \left \la a,\fc{b - F\sqrt{d}}{2} \right \ra \right] \ \text{where} \  
F = \left \{ \begin{array}{l l} 
2 & d \equiv 2,3 \, (\text{mod}~4) \\ \\
1 & d \equiv 1 \, (\text{mod}~4). \\
\end{array}\right.  $$}

The main aim of this paper is to describe the sets
$$ \cl(n) := \{ q : \Dl(q) = n \} \big / \un{\textbf{GL}}_2(\Z), $$
in terms of geometric invariants of orders in quadratic number fields.  This extends the question considered by Gauss in three ways: we consider all quadratic forms, and not only the positive definite ones; we consider all isometries, and not only proper ones; we consider discriminants $n$ that are not fundamental.

A modern perspective on these classical ideas, used in the 1980's by Ono~\cite{Ono} for number fields 
and extended by Morishita~\cite{Mor} to general global fields,   
identifies $\cl^+(\Dl(q_d))$ with $H^1_\fl(\Z,\Od^+)$, where $\Od^+$ is the \emph{special orthogonal group} of $q_d$. 
This flat cohomology, which {\emph{a priori}} is a pointed set but is an abelian group since $\Od^+$ is commutative, classifies all integral binary forms that are locally isomorphic to $q_d$ in the flat (i.e.~fppf) topology modulo proper isometries.  
Analogously, the first Nisnevich cohomology set classifies forms in the principal genus of $q_d$.  
We extend this approach to arbitrary quadratic orders $\CO \subseteq \CO_k$ 
and obtain a classification, in terms of the Picard group $\Pic \CO$,
of isomorphism classes (not just proper isomorphism classes) of integral forms that are locally isomorphic in the flat topology 
to the norm form associated with $\CO$. 

\mk

\mk

\subsection{Organization of the paper} 
We briefly describe the structure of the paper and point out its main results. 
Sections~\ref{Pre} and~\ref{sec:class.set} recall the basic notions we will use, most notably the norm torus associated to an order.  Section~\ref{quadratic forms} defines the orthogonal and special orthogonal groups $\OV$ and $\OV^+$ associated to a quadratic form $q$.  If $q$ is degenerate over $\Z$, the orthogonal group $\OV$ need not be flat over $\Z$.  Thus we work instead with $\widetilde{\Ob}_q$, the schematic closure in $\OV$ of the generic point.  We obtain an identification (Lemma~\ref{N = O+}) of the special orthogonal group of a norm form of an order $\CO$ (with respect to a fixed $\Z$-basis of $\CO$) with the norm torus of $\CO$.  Finally, we let $\CO_{d,m}$ denote the unique order of index $m$ in the maximal order of $k = \Q(\sqrt{d})$ and fix $\Z$-bases of the orders $\CO_{d,m}$.  There is a natural bifurcation into two cases: either the norm form $q_{d,m} := q_{\CO_{d,m}}$ is diagonal for a suitable choice of basis (Case (II)) or not (Case (I)).  Case (I) holds when $d \equiv 1 \, \mathrm{mod} \, 4$ and $m$ is odd, whereas Case (II) covers all other instances.

Section~\ref{sec:main}, the heart of the paper, starts by determining (Proposition~\ref{prop:quotient}) the structure of the quotient $\widetilde{\Ob}_{q_{d,m}} / \Ob_{q_{d,m}}^+$, which is always a finite flat group scheme of order two.  The proof is short and relies on the theory of finite flat group schemes.  For comparison, in an appendix to the paper we provide a more classical proof that writes down defining polynomials of $\widetilde{\Ob}_{q_{d,m}}$ and $\Ob_{q_{d,m}}^+$.  We then turn to studying the pointed set $H^1_\fl(\Z, \Ob_{q_{d,m}}) = H^1_\fl(\Z, \widetilde{\Ob}_{q_{d,m}})$.  In Case (I) it is canonically identified with $H^1_\fl(\Z, \Ob_{q_{d,m}}^+)$, whereas in Case (II) it also contains classes of forms of discriminant $-\Delta(q_{d,m})$.  This is shown in Proposition~\ref{H1 1 mod 4} and Lemma~\ref{H1 2,3 mod 4}, respectively.  From this we can study the sets $\cl(n)$ for many discriminants $n$.  The following is the content of Propositions~\ref{pro:cl.case.i} and~\ref{pro:cl.case.ii}:

\begin{prop}
Let $D \in \Z$ be an integer such that $D \equiv 0 \, \mathrm{mod} \, 4$ or $D \equiv 1 \, \mathrm{mod} \, 4$.  Suppose further that $D$ is not a perfect square and not of the form $D = -3 \cdot 4^m$ for some $m \geq \mathbb{N}_0$.  Then
$$
\cl(D) = \begin{cases}
\cl^+(D) = H^1_\fl(\Z, \OV^+) &: D \equiv 1 \, \mathrm{mod} \, 4 \\
\cl^+(D) / \sim \, = H^1_\fl(\Z, \OV^+)/\sim &: D \equiv 0 \, \mathrm{mod} \, 4,
\end{cases}
$$
where the explicit quadratic form $q$ is the norm form of a quadratic order with respect to one of our explicit bases, and the equivalence relation $\sim$ is given by $[ax^2 + bxy + cy^2] \sim [ax^2 - bxy + cy^2]$.
\end{prop}

The relation stated here between quadratic forms and flat cohomology fails for discriminants of the form $-3 \cdot 4^m$; see Remark~\ref{rem:bad.discriminants}.  Along the way we study a number of explicit examples.  
For any square-free $d \neq 0,1$ we show in Theorem~\ref{H1 Nd and Pic+} that
$$
\cl^+(\Delta_{\Q(\sqrt{d})}) \cong \{\pm 1\}^{\mu(d)} \times \mathrm{Pic}^+(\CO_k), \ \text{where} \  
\mu(d) =
\left \{ 
\begin{array}{l l}
1 & d<0 \\ 
0 & d>0. 
\end{array}\right. $$   
This is a straightforward extension of Gauss' proper classification to all forms, not just the positive definite ones.  More generally, our analysis of Case (II) leads to an extension of the classification to many cases in which $4D$ is not a fundamental discriminant. Theorem~\ref{Gauss Z[root(d)]}, another classical theorem that we prove with new methods, states that if $D$ is any integer that is not a perfect square and not of the form $D = -3 \cdot 4^m$, then
$$ \cl^+(4D) \cong \{\pm 1\}^{\tilde{\ve}(D)} \times \mathrm{Pic}(\Z[\sqrt{D}]), $$
where 
\begin{align*} \tilde{\ve}(D) &=
\left \{ \begin{array}{l l}
0 & D > 0 \ \text{and} \ \mathrm{Nr}(\Z[\sqrt{D}]^\times) = \{ \pm 1 \} \\
1 & \text{otherwise.} \\
\end{array}\right. 
\end{align*} 
Note that $\Z[\sqrt{D}]$ is not a Dedekind domain in general, and that Theorem~\ref{Gauss Z[root(d)]} remains true for discriminants of the form $-3 \cdot 4^m$; it is our proof that fails.
%
Recall that $k = \Q(\sqrt{d})$.  In Proposition~\ref{H1 cardinality}, we express the cardinality $|H^1_\fl(\Z, \Ob_{q_{\CO_k}})|$ in terms of the narrow class numbers of $\Q(\sqrt{d})$ and $\Q(\sqrt{-d})$.  These results do not allow us to deduce any interesting relation between the two narrow class numbers.
Finally, we show in Corollary~\ref{exponent 2} that any $\Ob_{q_{d,m}}^+$-torsor, tensored with itself, 
belongs to the principal genus of $q_{d,m}$. 
This may be viewed as as extension, in the language of cohomology, of another classical theorem of Gauss. 

\bk \noindent

{\bf Acknowledgements:} 
The authors thank J.~Bernstein, B.~Conrad, P.~Gille, B.~Kunyavski\u\i, and B.~Moroz 
for valuable discussions concerning the topics of the present article.  
They are grateful to the anonymous referee for a careful reading of the article and helpful comments that have clarified the exposition.

\bk

\section{Preliminaries} \label{Pre}
Let $k / \Q$ be a finite Galois extension with Galois group $\G = \text{Gal}(k/\BQ)$ and degree $n = [k : \Q]$.   
Let $\un{\BG}_m$ 
and $\un{\textbf{GL}}_n$ denote the multiplicative and general linear $\Z$-groups, respectively. 
Recall that an {\emph{order}} in $\mathcal{O}_k$ is a subring that has the maximal rank $n$ as a $\Z$-lattice.
Fix a $\Z$-basis $\Om = \{\om_1,...,\om_n\}$ for an \emph{order} 
$\CO_\Om \subseteq \CO_k$.
The Weil restriction of scalars $\un{R}_\Om = \Res_{\CO_\Om/\Z}(\un{\BG}_m)$ is an $n$-dimensional algebraic $\Z$-torus 
that admits an isomorphism $\rho: \un{R}_\Om(\Z) \simeq \CO_\Om^\times $~\cite[\S 7.6]{BLR}.  
The natural action of $\rho(\un{R}_\Om(\Z))$ on $\CO_\Om$  
yields a canonical embedding of $\un{R}_\Om$ in 
$\text{Aut}(\CO_\Om) = \un{\textbf{GL}}(\CO_\Om)$, depending only on the order $\CO_\Om$ and not on the $\Z$-basis $\Om$.  
The choice of $\Om$ provides 
an embedding $\iota: \un{R}_\Om \hookrightarrow \un{\textbf{GL}}_n$.   
The composition of $\iota$ 
with the determinant gives a map $\un{R}_\Om \to \un{\BG}_m$ that we abusively denote\footnote{Note that the map $\det$ depends only on the order $\CO_\Om$ and not on the choice of basis $\Om$. 
Indeed, the constructions of this and the following two sections, up to the explicit matrix realizations of~\eqref{qd} and~\eqref{Ad}, 
are independent of the choice of $\Om$.} 
{{$\det$}.}
Then $\mathrm{Nr}(\a) = \det(\iota(\rho^{-1}(\a)))$ for all $\a \in \CO_\Om^\times$, where $\mathrm{Nr}: k^\times \to \Q^\times$ is the usual norm map; see Exercise 9(c) of~\cite[Section II.5]{Bou}. 
We get a short exact sequence of commutative $\Z$-group schemes where the quotient map is faithfully flat in the sense of~\cite[0.6.7.8]{EGAI}:  
\begin{equation} \label{exact Z sequence}
1 \to \N_\Om \to \un{R}_\Om \xrightarrow{\det} \un{\BG}_m \to 1. 
\end{equation}
The generic fibers of the elements of this sequence are the norm torus $N=\Res^{(1)}_{k/\Q}(\BG_m)$, 
the Weil torus $R=\Res_{k/\Q}(\BG_m)$, 
and the multiplicative $\Q$-group $\BG_m$, respectively. 
Their fibers at any prime $p$ are denoted by $(\N_\Om)_p$, $(\un{R}_\Om)_p$ and $(\un{\BG}_m)_p$, respectively,
while their reductions are overlined.  
We omit the subscript $\Om$ when $\CO_\Om$ is the maximal order $\CO_k$.

\bk

While $\un{\BG}_m$ and $\un{R}_\Om$ are smooth over $\Sp \Z$, the kernel $\N_\Om$ need not be smooth, 
in that it may have a non-reduced reduction at some prime. 
However, $\un{N}_\Om$ is the kernel of a morphism of $\CO_\Om$-tori and thus
is of multiplicative type.  In particular, it is faithfully flat and affine.   
So instead of using \'etale cohomology, we shall restrict ourselves to flat cohomology. 


%

Applying flat cohomology to~\eqref{exact Z sequence}
gives rise to a long exact sequence of pointed sets; see \cite[III, Proposition~3.3.1.(i)]{Gir}: 
\begin{equation} \label{Z-points}
1 \to \N_\Om(\Z) \to \un{R}_\Om(\Z) \cong \CO_\Om^\times \stackrel{\Nr}{\longrightarrow} \{ \pm 1 \} 
  \to H^1_\fl(\Z,\N_\Om) \to H^1_\fl(\Z,\un{R}_\Om) \to H^1_\fl(\Z,\un{\BG}_m) = \Pic \Z = 0.  
\end{equation}
Now $\CO_\Om$ is finite and torsion-free over $\Z$, hence flat.
By Shapiro's Lemma~\cite[XXIV,~Prop.~8.2]{SGA3},  
we have $ H^1_\fl(\Z,\un{R}_\Om) \cong H^1_\fl(\CO_\Om,\un{\BG}_{m,\CO_\Om}) = \Pic \CO_\Om$.  
Thus~\eqref{Z-points} can be rewritten as
\begin{equation} \label{N and Pic(Ok)}
1 \to \{\pm 1\}/\Nr(\CO_\Om^\times) \to H^1_\fl(\Z,\N_\Om) \to \Pic (\CO_\Om) \to 1.   
\end{equation} 
The maximal order $\CO_k$ is a Dedekind domain whose Picard group coincides with the ideal class group of $k$. 
The set $\{\pm 1\}/\Nr(\CO_k^\times)$ is equal to the zero-Tate cohomology set 
$H_T^0(\G,\CO_k^\times)$~\cite[Example~1]{Ono}. 
Thus, in the case $\CO_\Omega = \CO_k$, we deduce an isomorphism of finite groups
\begin{equation} \label{Pic Ok}
H^1_\fl(\Z,\N) / H_T^0(\G,\CO_k^\times) \cong \Pic \CO_k. 
\end{equation}
If $n$ is odd, then $\Nr(-1)=(-1)^n=-1$. 
Therefore $H^1_\fl(\Z,\N) \cong \Pic \CO_k$  
and it follows that
\begin{equation} \label{n>2}
h_k = |H^1_\fl(\Z,\N)|.
\end{equation}
In the quadratic case $n = 2$, we have $k=\BQ(\sqrt{d})$ for some square-free integer $d \not\in \{ 0,1 \}$.  
If $\CO_\Omega$ is the maximal order $\CO_k$, we set $h_d$ and $\un{N}_d$ to be the class number $h_k$ and the $\Z$-group $\N$, respectively.   
Then~\eqref{N and Pic(Ok)} implies that 
\begin{equation} \label{ve quadratic}
|H^1_\fl(\BZ,\N_d)| = h_d \cdot 2^{\ve(d)},
\end{equation}
where~\cite[\S 5,Example~2]{Ono}:
\begin{align} \label{equ:def.epsilon}
\ve(d) &=
\left \{ \begin{array}{l l}
0 & d > 0 \ \text{and} \ \mathrm{Nr}(\CO_k^\times) = \{ \pm 1 \} \\
1 & \text{otherwise.} \\
\end{array}\right. 
\end{align} 
Let $\mathrm{Pic}^+(\CO_k)$ be the narrow class group of $k$ and let $h_d^+$ denote its cardinality.  
Then $h_d^+ = h_d$ unless $d>0$ and $\Nr(\CO_k^\times) =\{1\}$, in which case $h_d^+ = 2 h_d$. 
Now~\eqref{ve quadratic} implies 
\begin{align} \label{mu quadratic}
|H^1_\fl(\BZ,\N_d)| = h_d^+ \cdot 2^{\mu(d)}, \ \ 
\mu(d) &:=
\left \{ \begin{array}{l l}
1 & d < 0 \\
0 & d > 0. \\
\end{array}\right. 
\end{align} 
Hence computing the narrow class number $h_d^+$ 
is equivalent to determining $|H^1_\fl(\BZ,\N_d)|$. 

\bk

\section{The class set of the norm torus} \label{sec:class.set}
Let $\un{G}$ be an affine flat group scheme defined over $\Sp \Z$ with generic fiber $G$.  
We denote by $\un{G}_p$ the $\Z_p$-scheme obtained from $\un{G}$ by the base change $\Sp \Z_p \to \Sp \Z$.     
For a global field $F$, recall that the adelic group $\un{G}(\A_F)$ is the restricted product of the groups $G(F_v)$, where $F_v$ is the completion of $F$ at a place $v$.  We write $\un{G}(\A)$ for $\un{G}(\A_\Q)$.
As in~\cite[\S 1.2]{Bor}, consider its subgroup $\un{G}(\A_\infty) = G(\BR) \times \prod_p \un{G}(\Z_p)$. 
 
\begin{definition}  
The \emph{class set} of $\un{G}$ is the set of double cosets  
$\mathrm{Cl}_\iy(\un{G}) := \un{G}(\A_\iy) \backslash \un{G}(\A) / G(\BQ)$.  
This set is finite (\cite[Thm.~5.1]{Bor}) and its cardinality, denoted by $h(\un{G})$, is called the \emph{class number} of ~$\un{G}$.  
\end{definition}

\begin{definition} \label{Sha}
Let $S$ be a finite set of places in $\BQ$. 
The \emph{first Tate-Shafarevich set} of $G$ over $\BQ$ relative to $S$ is
$$ \Sh^1_S(\BQ,G) := \ker\left[H^1(\Q,G) \to \prod_{v \notin S} H^1({\Q}_v,G_v) \right]. $$
When $S = \varnothing$, we simply write $\Sh^1(\BQ,G)$.   
\end{definition}

As $\un{G}$ is affine, flat and of finite type,   
Y.~Nisnevich has shown~\cite[Theorem~I.3.5]{Nis} that it admits an exact sequence of pointed sets
\begin{equation} \label{Nis sequence}
1 \to \text{Cl}_\iy(\un{G}) \to H^1_\fl(\Z,\un{G}) \to H^1(\Q,G) \times \prod_p H^1_\fl({\Z}_p,\un{G}_p)   
\end{equation}
whose left exactness reflects the fact that $\text{Cl}_\iy(\un{G})$ is the set of $\Z$-forms of $\un{G}$ 
that are isomorphic to $\un{G}$ over $\Q$ and over ${\Z}_p$ for all $p$.     
If $H^1_\fl(\BZ_p,\un{G}_p)$ injects into $H^1(\BQ_p,G_p)$ for all $p$, 
then, as in~\cite[Cor.~I.3.6]{Nis}, the sequence~\eqref{Nis sequence} simplifies to
\begin{equation} \label{Nis short}
1 \to \text{Cl}_\iy(\un{G}) \to H^1_\fl(\Z,\un{G}) \to H^1(\Q,G). 
\end{equation}
More precisely, there is an exact sequence of pointed sets (cf.~\cite[Corollary A.8]{GP}) 
\begin{equation} \label{Nis short right exact}  
1 \to \mathrm{Cl}_\iy(\un{G}) \to H^1_\fl(\BZ,\un{G}) \to B \to 1  
\end{equation}
in which 
$$ B = \left\{ [\g] \in H^1(\Q,G): \forall p, [\g \otimes {\Z}_p] \in \Im \left(H^1_\fl({\Z}_p,\un{G}_p) \to H^1({\Q}_p,G_p) \right) \right\}. $$

\bk

Let $k / \Q$ be a finite Galois extension as in the previous section.
Let $p$ be a rational prime, and let $P$ be a prime of $k$ dividing $p$. 
Write $\Q_p$ and $k_P$ for the localizations of $\Q$ at $p$ and of $k$ at $P$, respectively,    
noting that $k_P$ is independent of the choice of $P$, up to isomorphism.
Observe that $k \otimes_\Q \Q_p \cong k_P^r$, where $r$ is the number of primes of $k$ dividing $p$. 
The norm map $\Nr : k \to \Q$ induces a map $\mathrm{Nr}: k \otimes_\Q \Q_p \to \Q_p$; 
under the isomorphism above this corresponds to the product of the norm maps $
\mathrm{N}_{k_P / \Q_p}$ on the components. 
Similarly, $\mathcal{O}_k \otimes_\Z \Z_p \simeq \mathcal{O}_{k_P}^r$.  Write $U_P$ for $\mathcal{O}_{k_P}^\times$.

Fixing a $\Z$-basis $\Omega$ of an order $\CO_\Omega \subset \CO_k$ as in the previous section and applying flat cohomology to the short exact sequence of flat $\Z_p$-groups 
$$ 1 \to \N_p \to \un{R}_p \to (\un{\BG}_m)_p \to 1 $$
yields the exact and functorial sequence   
$$ 1 \to \N_p(\Z_p) \to \un{R}_p(\Z_p) \cong U_P^r \xrightarrow{\mathrm{Nr}} \BZ_p^\times \to H^1_\fl(\Z_p,\N_p) \to 1, $$
since $H^1_\fl(\Z_p, \un{R}_p)$ is the Picard group of a product of local rings and thus vanishes.  
We deduce an isomorphism $H^1_\fl(\BZ_p,\N_p) \cong \BZ_p^\times / \mathrm{Nr}(U_P^r) = \BZ_p^\times / \mathrm{N}_{k_P / \Q_p}(U_P)$. 
Applying Galois cohomology to the short exact sequence of $\BQ_p$-groups
$$ 1 \to N_p \to R_p \to (\BG_m)_p \to 1 $$
gives rise to the exact sequence of abelian groups
$$ 1 \to N_p(\BQ_p) \to R_p(\BQ_p) \cong (k_P^\times)^r \xrightarrow{\mathrm{Nr}} \BQ_p^\times \to H^1(\BQ_p,N_p) \to 1, $$
where the rightmost term vanishes by Hilbert's Theorem~90.
Hence {we may again deduce a functorial isomorphism} $H^1(\BQ_p,N_p) \cong \BQ_p^\times / \mathrm{N}_{k_P / \Q_p}(k_P^\times)$. 
Note that $U_P$ is compact and thus $\mathrm{N}_{k_P / \Q_p}(U_P)$ is closed in $\Q_p^\times$. 
Only units have norms that are units, so we obtain an embedding of groups: 
\begin{equation} \label{embedding of local N}
H^1_\fl(\BZ_p,\N_p) \cong \BZ_p^\times / \mathrm{N}_{k_P / \Q_p}(U_P) \hookrightarrow \BQ_p^\times / \mathrm{N}_{k_P / \Q_p}(k_P^\times) \cong H^1(\BQ_p,N_p). 
\end{equation}

\begin{prop} \label{h(N)}
Suppose that $[k:\Q]$ is prime. Let $S_r$ be the set of primes dividing $\Delta_k$. 
Then there is an exact sequence of abelian groups (compare with formula (5.3) in \cite{Mor}):  
$$ 1 \to \text{Cl}_\iy(\N) \to H^1_\fl(\Z,\N) \to \Sh^1_{S_r \cup \{\iy\}}(\BQ,N) \to 1. $$  
\end{prop}

\begin{proof} 
Since $H^1_\fl(\Z_p,\N_p)$ embeds into $H^1(\Q_p,N_p)$ for any prime $p$ by~\eqref{embedding of local N}, 
the $\Z$-group scheme $\N$ admits the exact sequence~\eqref{Nis short right exact}, 
in which the terms are abelian groups as $\N$ is commutative.  
The pointed set $\text{Cl}_\iy(\N)$ is in bijection with the first Nisnevich cohomology set $H^1_{\text{Nis}}(\Z,\N)$ (cf.~\cite[I.~Theorem~2.8]{Nis}), 
which is a subgroup of $H^1_\fl(\Z,\N)$ because any Nisnevich cover is flat. 
Hence the first map is an embedding. 
Since $k/\Q$ has prime degree and so is necessarily abelian, at any prime $p$ the local Artin reciprocity law
implies that
$$ n_p=|\text{Gal}(k_P/\Q_p)|=[\Q_p^\times:\text{N}_{k_P/\Q_p}(k_P^\times)]=|H^1(\Q_p,N_p)|. $$
Furthermore, since $[k:\Q]$ is prime, any ramified place $p$ is totally ramified,   
so $[\Z_p^\times : \mathrm{N}_{k_P / \Q_p}(U_P)] = n_p$ \cite[Theorem~5.5]{Haz}.   
Together with~\eqref{embedding of local N} this means that $H^1_\fl(\Z_p,\N_p)$ 
coincides with $H^1(\Q_p,N_p)$ at ramified primes and vanishes elsewhere. 
Thus the set $B$ of~\eqref{Nis short right exact} consists of classes $[\g] \in H^1(\Q,N)$ 
whose fibers vanish at unramified places.   
This means that $B=\Sh^1_{S_r \cup \{\iy \}}(\BQ,N)$, where $S_r$ is the set of ramified primes of $k / \Q$. 
\end{proof}

\begin{remark} \label{B}
The group $B=\Sh^1_{S_r \cup \{\iy\}}(\Q,N)$ embeds in the group $H^1(\BQ,N)$ by definition. 
But $H^1(\BQ,N) \cong \BQ^\times/ \mathrm{Nr}(k^\times)$, so $B$ has exponent dividing $n$.   
\end{remark}

\bk

\section{Norm forms of orders in quadratic number fields} \label{quadratic forms}
\subsection{Orthogonal groups}
Throughout the rest of this article we will assume that $k$ is a quadratic number field, 
so that $k = \Q(\sqrt{d})$, where $d \not\in \{ 0, 1 \}$ is a square-free integer. 
%
Recall that a \emph{binary integral quadratic form} 
is a homogeneous polynomial of order two in two variables with coefficients in $\Z$: 
$$ q : \Z^2 \to \Z; \  q(x,y) = ax^2+bxy+cy^2,  \ a,b,c \in \Z. $$  
The form $q$ is represented by the symmetric $2 \times 2$ matrix 
$B_q=\left( \begin{array}{cc}
    a    & b/2  \\ 
    b/2  & c  \\ 
	\end{array}\right)$ satisfying $q(x,y) = (x,y) B_q (x,y)^t$. 
We denote $q$ by the triple $(a,b,c)$ and set $\Delta(q) = b^2 - 4ac$.  
Consider the symmetric bilinear form 
$$ \wt{B_q}(u,v) = q(u+v) - q(u) - q(v),$$
for $u,v \in \Z^2$.  Set $\mathrm{disc}(q)$ to be the determinant of the matrix $\wt{B_q}(e_i,e_j)_{1 \leq i,j \leq 2}$, where $e_1 = (1,0)$ and $e_2 = (0,1)$.  We say that $q$ is \emph{non-degenerate} over a $\Z$-algebra $R$ if $\text{disc}(q)$ is invertible in $R$. In particular, $q$ is non-degenerate over $\Z$ when $\mathrm{disc}(q) = \pm 1$ (cf. \cite[\S 2]{Con2}). 
It is easily checked that this matrix is $2B_q$, 
thus $\Delta(q) = -4\det(B_q) = -\text{disc}(q)$. 
We assume $\Delta(q) \neq  0$, so $q$ is non-degenerate over $\Q$.  Observe that $q$ is degenerate over $\Z$ unless $q(x,y) = \pm xy$. 
Two integral forms $q$ and $q'$ are said to be \emph{isomorphic} over a $\Z$-algebra $R$
if there exists an $R$-\emph{isometry} from one form to the other, namely 
a matrix $A \in \un{\textbf{GL}}_2(R)$ such that $q \circ A = q'$. 
If $\det A = 1$, then we say that $A$ gives a {\emph{proper isomorphism}} over $R$ between $q$ and $q'$.

\begin{definition} (\cite[p.303]{Con2}) \label{def:orthogonal.group}
Let $V$ be a free $\Z$-module of rank two and $q: V \to \Z$ a quadratic form with $\mathrm{disc}(q) \neq 0$.
The \emph{orthogonal group} of the quadratic lattice $(V,q)$ is the affine $\Z$-group 
$$ \OV = \{ A \in \textbf{GL}(V) : q \circ A = q \}. $$
\end{definition}


Since $(V,q)$ is not assumed to be non-degenerate over $\Z$, 
we note that $\OV$ may fail to be $\Z$-flat for fiber-jumping reasons~\cite[Section 2]{Con3}.  
We are thus led to restrict our attention to the closed subscheme $\widetilde{\OV} \subset \OV$ defined as the schematic closure in $\OV$ of the generic point.   
Since $(V,q)$ is non-degenerate over $\Q$, and the characteristic of $\Q$ is not $2$,  
we may define the special orthogonal subgroup of the generic fiber $\textbf{O}_q = \OV \otimes_{\Z} \BQ$  
naively as  
$$ \textbf{O}^+_q = \ker[\textbf{O}_q \xrightarrow{\det} \mu_2]. $$  
The analogous definition over $\Z$ is more subtle but is not limited to the non-degenerate case. 
  
\begin{definition} \label{SOV} (\cite[Def.~2.8]{Con3})
The \emph{special orthogonal group} $\OV^+$ of a quadratic lattice
$(V,q)$ is the schematic closure of $\textbf{O}^+_q$ inside $\OV$ (or, equivalently, inside $\widetilde{\OV}$). 
As $\Z$ is Dedekind it is flat.  Indeed, the coordinate ring of the schematic closure, in a $\Z$-scheme, of an affine subscheme of the generic fiber is clearly torsion-free, hence flat over $\Z$.
\end{definition}

\begin{remark} \label{unique closed and flat subgroup}
The $\Q$-group $\textbf{O}_q$ is smooth.  Its open subgroup $\textbf{O}_q^+$ is smooth and connected~\cite[Theorem~1.7(1)]{Con1} and
is the unique such subgroup~\cite[Proposition~3.2]{Con1}. 
By the correspondence between flat closed subschemes of $\OV$  
and closed subschemes of $\textbf{O}_q$ \cite[Cor.~2.8.1]{EGAIV},   
$\OV^+$ is the unique flat and closed subgroup of $\OV$ whose generic fiber is ~$\textbf{O}_q^+$. 
\end{remark}

\begin{definition} 
Let $\Om = \{\om_1,\om_2\} \subset \CO_k$ be a basis of a quadratic order $\CO_\Om \subseteq \CO_k$. 
The \emph{norm form} associated to $\Om$ is the integral quadratic form   
$$ q_\Om(x,y) := \Nr(x\om_1 + y \om_2). $$   
\end{definition}

Let $\un{\textbf{O}}_\Om$ denote the orthogonal group of $q_\Om$.  The choice of the basis $\Omega$ specifies an isomorphism of $\Z$-modules $r : \Z^2 \stackrel{\sim}{\to} \mathcal{O}_\Omega$ given by $r(x,y) = x \omega_1 + y \omega_2$.  In turn, as we observed at the beginning of Section~\ref{Pre}, 
the map $r$ induces 
an embedding $\iota: \un{R}_\Omega \hookrightarrow \un{\textbf{GL}}_2$.

\begin{lem} \label{N = O+}
Let $\Omega$ be a basis of the quadratic order $\CO_\Omega \subseteq \CO_k$.  Then
$\un{\boldsymbol{\mathrm{O}}}_\Om^+ = \N_\Om$. 
\end{lem}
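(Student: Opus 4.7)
The plan is to exhibit $\un{\textbf{O}}^+_\Om$ and $\N_\Om$ as the same closed $\Z$-subgroup scheme of $\un{\textbf{GL}}_2$, using the multiplication embedding $\mu\colon\un{R}_\Om\hookrightarrow\un{\textbf{GL}}_2$ constructed in Section~\ref{Pre} — the one satisfying $\det\circ\mu=\mathrm{N}_{k/\Q}$, so that $\mu$ already sends $\N_\Om=\ker(\det|_{\un{R}_\Om})$ into $\un{\textbf{SL}}_2$.

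For the inclusion $\N_\Om\subseteq\un{\textbf{O}}^+_\Om$, I would argue functorially: for any $\Z$-algebra $A$, any $\alpha\in\N_\Om(A)\subseteq(\CO_\Om\otimes_\Z A)^\times$, and any $\xi\in\CO_\Om\otimes_\Z A$, multiplicativity of the norm gives
\[
q_\Om(\mu(\alpha)\xi)=\mathrm{N}_{k/\Q}(\alpha\xi)=\mathrm{N}_{k/\Q}(\alpha)\cdot\mathrm{N}_{k/\Q}(\xi)=q_\Om(\xi),
\]
while $\det\mu(\alpha)=\mathrm{N}_{k/\Q}(\alpha)=1$, so $\mu(\alpha)\in\un{\textbf{O}}^+_\Om(A)$. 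For the reverse inclusion I would normalize $\Om$ to contain $\omega_1=1$ (legitimate since orders are unital) and, for $M\in\un{\textbf{O}}^+_\Om(A)$, set $\alpha:=M(1)\in\CO_\Om\otimes_\Z A$. Then $\mathrm{N}(\alpha)=q_\Om(M(1))=q_\Om(1)=1$, so $\alpha\in\N_\Om(A)$; it remains to show that $\psi:=\mu(\alpha)^{-1}\circ M\in\un{\textbf{O}}^+_\Om(A)$, which fixes $1$, is the identity. Writing $\omega_2=\omega$ with minimal polynomial $T^2-BT+C$, so that $q_\Om(x,y)=x^2-Bxy+Cy^2$, the fact that $\psi$ fixes $1$ together with $\det\psi=1$ forces $\psi$ to have the shape $\bigl(\begin{smallmatrix}1 & e \\ 0 & 1\end{smallmatrix}\bigr)$, and the isometry condition reduces to the equations $2e=0$ and $e(e-B)=0$, which over $\Z$ (and indeed any $\Z$-algebra without $2$-torsion) force $e=0$ and hence $M=\mu(\alpha)$.

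The main obstacle is that the system $2e=0$, $e(e-B)=0$ does admit spurious solutions over $\Z$-algebras of characteristic~$2$, so the pointwise check above does not on its own yield a scheme-theoretic equality at the prime~$2$. The cleanest way to close the argument is a generic-fiber plus flatness argument: over $\Q$ the discriminant of $q_\Om$ differs from $\Delta_k$ by the square of the conductor, so the standard classification of two-dimensional special orthogonal groups gives
\[
\un{\textbf{O}}^+_\Om\otimes\Q\;=\;SO(q_\Om\otimes\Q)\;=\;\Res^{(1)}_{k/\Q}(\BG_m)\;=\;N\;=\;\N_\Om\otimes\Q,
\]
and since $\N_\Om$ is flat over $\Z$ by Lemma~\ref{N is flat}, it coincides with the scheme-theoretic closure of $N$ inside $\un{\textbf{O}}^+_\Om$, yielding the desired equality of flat $\Z$-group schemes — the form in which the identification is used in the subsequent flat-cohomology computations.
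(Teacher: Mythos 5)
Your first inclusion ($\N_\Om \subseteq \un{\textbf{O}}_\Om^+$ via multiplicativity of the norm) and your reduction of the reverse inclusion to the unipotent computation $2e=0$, $e(e-B)=0$ are correct, and they make explicit precisely the step that the paper's own proof compresses into the unjustified middle equality $\{a\in\un{\text{Aut}}(\CO_\Om):\Nr\circ a=\Nr,\ \det(a)=1\}=\{B\in\un{R}_\Om:\det(B)=1\}$: namely, that every norm-preserving determinant-one module automorphism of $\CO_\Om\otimes A$ is multiplication by a norm-one unit. The paper asserts this via two commutative diagrams and gives no argument; your calculation shows it is exactly the point that fails for $\BF_2$-algebras $A$.

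The gap is in your closing step: the flatness argument does not deliver the equality of schemes you claim. Knowing that $\N_\Om$ is flat with generic fiber $N=(\un{\textbf{O}}_\Om^+)_\Q$ identifies $\N_\Om$ with the schematic closure of the generic fiber inside $\un{\textbf{O}}_\Om^+$; this closure equals $\un{\textbf{O}}_\Om^+$ itself only if $\un{\textbf{O}}_\Om^+$ is flat over $\Z$, and your own computation shows it is not. For instance, when $B$ is odd the section $e=B$ of $\left(\begin{smallmatrix}1&e\\0&1\end{smallmatrix}\right)$ over $\BF_2$ (equivalently, the reduction of $a_d$ modulo $2$, whose determinant becomes $+1$) is a point of $\un{\textbf{O}}_\Om^+(\BF_2)$ not lying on $\N_\Om$, and when $B=0$ any square-zero $e$ gives an extra infinitesimal section; either way the fiber of $\un{\textbf{O}}_\Om^+$ at $(2)$ is strictly larger than that of $\N_\Om$. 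So what you have actually proved is that $\N_\Om$ and $\un{\textbf{O}}_\Om^+$ agree over $\Z[1/2]$ and that $\N_\Om$ is the flat (schematic) closure of their common generic fiber --- strictly weaker than the stated identity. To be fair, this defect is inherited from the statement itself: with $\un{\textbf{O}}_\Om^+$ read literally as $\un{\textbf{O}}_\Om\cap\un{\textbf{SL}}_2$, the lemma fails at $(2)$, and the paper tacitly concedes as much in Section 5 when it observes that $\ker(\det:\Od\to\un{\mu}_2)$ need not be flat there. Your argument is a complete and correct proof of the amended assertion that $\N_\Om$ is the schematic closure of $SO(q_\Om\otimes\Q)$ in $\un{\textbf{O}}_\Om$, which is the form in which the identification is actually used later; but the closure argument should not be presented as establishing $\un{\textbf{O}}_\Om^+=\N_\Om$ on the nose.
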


\begin{proof} 
Over $\Q$, consider the map $q_\Om = \Nr \circ r: \Q^2 \to \Q$. 
For any $b \in R_\Om$ and $(x,y) \in \Z^2$ one has
$$ \iota(b) \cdot 
\left( \begin{array}{c}
    x  \\ 
    y  \\ \end{array}\right)
= r^{-1} \left(b \cdot r
\left( \begin{array}{c}
    x  \\ 
    y  \\ \end{array}\right) \right) .$$  
If $b \in N_\Om = \ker ( \Nr:R_\Om \to \BG_{m,\Q} ),$ then $\Nr \circ b = \Nr$ and we obtain an inclusion of $\Q$-groups: 
\begin{align*} 
\textbf{O}^+_\Om &=         \{ A \in \textbf{SL}_2: q_\Om \circ A = q_\Om \} \\ \nonumber 
                 &\supseteq \{ b \in N_\Om:  q_\Om \circ \iota(b) = q_\Om \} \\ \nonumber     
								 &=         \{ b \in N_\Om:  q_\Om \circ r^{-1} \circ b \cdot r = q_\Om \} \\ \nonumber  
			 					 &=         \{ b \in N_\Om:  \Nr \circ b \cdot r = \Nr \circ r \} = \{ b \in N_\Om:  \Nr \circ b = \Nr \} = N_\Om. 
\end{align*}
Since $\textbf{O}^+_\Om$ and $N_\Om$ are both one-dimensional tori, the inclusion is an equality.  Hence $\un{\boldsymbol{\mathrm{O}}}_\Om^+$ and $\N_\Om$ are $\Z$-flat closed subgroups of $\un{\boldsymbol{\mathrm{O}}}_\Om$ with the same generic fiber $\textbf{O}^+_\Om = N_\Om$.  Such an object is unique by Remark~\ref{unique closed and flat subgroup}, so $\un{\boldsymbol{\mathrm{O}}}_\Om^+ = \N_\Om$.
\end{proof}

We note that for the particular bases $\Omega$ used in the sequel, Lemma~\ref{N = O+} can be checked explicitly; see Remark~\ref{rmk:explicit.equality}.

\subsection{Orders in quadratic fields}
Recall that $k = \Q(\sqrt{d})$.  For every $m \in \mathbb{N}$, the maximal order $\CO_k$ contains a unique order $\CO_{d,m}$ of index $m$.  If $\{ 1, \omega \}$ is any $\Z$-basis of $\CO_k$, then $\CO_{d,m}$ is spanned by $\{ 1, m \omega \}$.  We fix the convenient $\Z$-basis $\Omega_{d,m} = \{ 1, \omega_{d,m} \}$ of $\CO_{d,m}$, where 
$$ \omega_{d,m} = 
\begin{cases}
\frac{1 + m \sqrt{d}}{2} &: d \equiv 1 \, \mathrm{mod} \, 4, \, m \, \mathrm{odd} \\
\frac{m}{2} \sqrt{d} &: d \equiv 1 \, \mathrm{mod} \, 4, \, m \, \mathrm{even} \\
m \sqrt{d} &: d \equiv 2,3 \, \mathrm{mod} \, 4.
\end{cases}
$$

Henceforth we denote by $q_{d,m}$ the associated norm form $q_{\Om_{d,m}}$.  We also set $q_d = q_{d,1}$.  Define
\begin{equation} \label{equ:c}
c_{d,m} = 
\begin{cases}
\frac{1 - m^2 d}{4} &: d \equiv 1 \, \mathrm{mod} \, 4, \, m \, \mathrm{odd} \\
\frac{m^2 d}{4}  &: d \equiv 1 \, \mathrm{mod} \, 4, \, m \, \mathrm{even} \\
m^2 d &: d \equiv 2,3 \, \mathrm{mod} \, 4
\end{cases}
\end{equation}
and note that $c_{d,m}$ is always an integer.  For simplicity in long expressions, we will sometimes drop the subscripts and write $c$ for $c_{d,m}$; this should cause no confusion.  We also write $\OVdm$ for $\un{\mathbf{O}}_{q_{d,m}}$, $\widetilde{\Ob}_{d,m}$ for $\widetilde{\Ob}_{d,m}$, etc.  We say that we are in {\bf{Case (I)}} if $d \equiv 1 \, \mathrm{mod} \, 4$ and $m$ is odd, and in {\bf{Case (II)}} otherwise.
Then 
\begin{align} \label{qd}
q_{d,m} = \left \{ \begin{array}{l l}
    (1,1,c_{d,m})   &: \mathrm{Case} \, \mathrm{(I)} \\ 
    (1,0,-c_{d,m})  &: \mathrm{Case} \, \mathrm{(II)}.
\end{array}\right. 
\ \ \text{and } \ \  
B_{q_{d,m}} =  
\left \{ \begin{array}{l l}
\left( \begin{array}{cc}
      1 &  1/2  \\ 
    1/2 &  c_{d,m}  \\ \end{array}\right) &: \mathrm{Case} \, \mathrm{(I)} \\ \\
\left( \begin{array}{cc}
      1 &   0  \\ 
      0 &  -c_{d,m}   \\ \end{array}\right) &: \mathrm{Case} \, \mathrm{(II)}.  
\end{array}\right. 
\end{align} 
Hence 
\begin{align} \label{norms}
\N_{d,m} :=  \N_{\Omega_{d,m}} = 
\left \{ \begin{array}{l l}
\Sp \Z[x,y]/ (x^2 + xy + c_{d,m}y^2-1) &: \ \mathrm{Case} \, \mathrm{(I)}   \\ 
\Sp \Z[x,y]/(x^2-c_{d,m}y^2-1)         &: \ \mathrm{Case} \, \mathrm{(II)}.  
\end{array}\right. 
\end{align}  
The integral matrix realization $\iota(\un{N}_{d,m}(\Z))$ is given by
\begin{align} \label{Ad}
A_d =  
\left \{ \begin{array}{l l}
\left( \begin{array}{cc}
    x & y    \\ 
  -c_{d,m} y & x+y  \\ 
	\end{array}\right), \ \det = 1  &: \mathrm{Case} \, \mathrm{(I)}   \\ \\
\left( \begin{array}{cc}
    x   & y   \\ 
    c_{d,m}y  & x  \\ 
	\end{array}\right), \ \det = 1   &: \mathrm{Case} \, \mathrm{(II)}.  
\end{array}\right. 
\end{align}  
For any pair $(d,m)$, the integral model $\un{N}_{d,m}$ has the generic fiber
\begin{equation*}
N_{d,m} =\N_{d,m} \otimes_{ \Z} \Q = \Sp \Q[x,y]/(x^2-dy^2-1). 
\end{equation*}
Note that $N_{d,m}$ is independent of $m$.

\bk

\section{The flat cohomology of the orthogonal group of a norm form} \label{sec:main}
\subsection{A quotient map}
We have shown in the previous section that the special orthogonal subgroup $\Ob_{d,m}^+$ is a flat closed subgroup of $\Ob_{d,m}$.  Since $\mathrm{Spec} \, \Z$ is one-dimensional, 
the fppf quotient $\Ob_{d,m} / \Ob_{d,m}^+$ is representable by~\cite[Th\'{e}or\`{e}me 4.C]{Ana} and thus has the structure of an affine $\Z$-group scheme.  However, this quotient is not flat.  Instead, we consider the quotient $\un{Q}_{d,m} = \widetilde{\Ob}_{d,m} / \Ob_{d,m}^+$, which is flat over $\Z$ because $\widetilde{\Ob}_{d,m}$ is.  Our first goal in this section is to determine the structure of $\un{Q}_{d,m}$, for which we will use the theory of finite flat group schemes.  An alternative proof, by means of explicitly writing out defining equations, is presented in an appendix at the end of the paper.

Recall that there are only two finite $\BZ$-groups of order two, up to isomorphism,
namely $\un{\Z/2} = \Sp \BZ[t]/(t^2-t)$ and $\un{\mu}_2 = \Sp \BZ[t]/(t^2-1)$;
see, for instance, the Corollary on page 21 of~\cite{TO} for a proof of this fact.
These two $\Z$-groups are locally isomorphic everywhere except at $(2)$, 
in which case $\un{\mu}_2 \otimes_{ \Z} \BF_2$ contains one nilpotent point 
while $\un{\Z/2} \otimes_{\Z} \BF_2$ is reduced and contains two points. 
%
%
%
\begin{prop} \label{prop:quotient}
Let $d \neq 0,1$ be square-free and $m \in \mathbb{N}$.  Then, as $\Z$-group schemes,
$$ \widetilde{\Ob}_{d,m} / \Ob_{d,m}^+ \simeq 
\begin{cases}
\un{\Z / 2} &: \mathrm{Case} \, \mathrm{(I)} \\
\un{\mu}_2 &: \mathrm{Case} \, \mathrm{(II)}.
\end{cases}
$$
\end{prop}
\begin{proof}
Recall from Definition~\ref{def:orthogonal.group} that for any quadratic form $q$, an isometry $A \in \OV$ satisfies $q \circ A = q$ and hence $A B_q A^t = B_q$.  Taking determinants of both sides, we find that $(\det B_q)((\det A)^2 - 1) = 0$.  In particular, if $q$ is non-degenerate over $\Q$, then any $A \in \widetilde{\Ob}_q$ satisfies $(\det A)^2 - 1 = 0$.  Hence the determinant induces a morphism of group schemes $\det: \widetilde{\Ob}_q \to \un{\mu}_2$.  


Since $\widetilde{\Ob}_{d,m}$ is a $\Z$-scheme defined by the polynomial $(\det A - 1)(\det A + 1)$, among others, it is the scheme-theoretic union of the closed $\Z$-subschemes of matrices of determinant $1$ and $-1$.  The former of these is $\Ob_{d,m}^+$, and we denote the latter by $\Ob_{d,m}^-$.  The left translation action on $\widetilde{\Ob}_{d,m}$ of $\left( \begin{array}{cc} 1 & 0 \\ 1 & -1 \end{array} \right)$ in Case (I) and of $\left( \begin{array}{cc} 1 & 0 \\ 0 & -1 \end{array} \right)$  in Case (II) interchanges the subschemes $\Ob_{d,m}^+$ and $\Ob_{d,m}^-$.
Hence all fibers of $\un{Q}_{d,m}$ have rank two, and we conclude that $\un{Q}_{d,m}$ is a finite flat affine $\Z$-group scheme of order two.  Thus it is isomorphic either to $\un{\Z / 2}$ or to $\un{\mu}_2$.  To distinguish between the two possibilities, it suffices to determine $\un{Q}_{d,m} \otimes_{\Z} \mathbb{F}_2$.
Since the reduction of $\mathrm{diag}(1,-1)$ modulo $2$ is the same as that of the identity matrix, we see that $\un{Q}_{d,m} \otimes_{\Z} \mathbb{F}_2$ contains only one point in Case (II).  In Case (I), on the other hand, it is apparent from~\eqref{Ad} and Lemma~\ref{N = O+} that the reduction modulo $2$ of the matrix $\left( \begin{array}{cc} 1 & 0 \\ 1 & -1 \end{array} \right)$ does not lie in that of $\un{N}_{d,m}(\Z) = \Ob_{d,m}^+(\Z)$.  Thus $\un{Q}_{d,m} \otimes_{\Z} \mathbb{F}_2$ has two points in this case.  We conclude that $\un{Q}_{d,m} = \un{\Z / 2}$ in Case (I) and $\un{Q}_{d,m} = \un{\mu}_2$ in Case (II), as claimed.
\end{proof}
\subsection{Twisted forms}
In this section we briefly recall the construction of a twisted form (in the fppf topology) of a group scheme $\un{G}$ defined over $\Sp R$, where $R$ is any unital commutative ring.  
Let $P$ be a representative of a class in $H^1_\fl(R,\un{G})$, i.e.~a $\un{G}$-torsor.   
Viewing $\un{G}$ as a $\un{G}$-torsor acting on itself by conjugation, we may consider the affine $R$-group scheme ${^P}\un{G}$ given by
the quotient of $P \times_R \un{G}$ by the $\un{G}$-action $s \cdot (p,g) = (ps^{-1},sgs^{-1})$.    
This is an inner form of $\un{G}$, 
called the \emph{twist} of $\un{G}$ by $P$.
It is isomorphic to $\un{G}$ in the fppf topology, 
and the map $\un{G} \mapsto {^P}\un{G}$ defines a bijection of pointed sets
$\theta_P: H^1_\fl(R,\un{G}) \to H^1_\fl(R,{^P}\un{G})$; see~\cite[\S 2.2, Lemma 2.2.3]{Sko} and the nearby Examples 1 and 2.   



\begin{remark} \label{rem:inner.form}
Fix a quadratic $\Z$-form $q$ of rank $n$ with algebraic automorphism group $\textbf{Aut}(q) := \OV$ defined over $\Sp \Z$.  
Any quadratic $\Z$-form $q'$ of rank $n$ corresponds to an $\OV$-torsor by 
$$ q' \mapsto \textbf{Iso}(q,q'). $$
This is a special case of a general framework due to Giraud; see~\cite[Proposition~2.2.4.5]{CF} for details.
It follows that $\un{\textbf{O}}_{q'}$ is an inner form of $\OV$, namely its twist by $\textbf{Iso}(q,q')$.  In terms of the classification of $\OV$-torsors by cohomology classes, this corresponds to changing the distinguished base point of $H^1_\fl(\Z,\OV)$.
\end{remark}


\begin{remark}
For any flat, i.e.~torsion-free, $\Z$-algebra $R$ the inclusion $\widetilde{\OV}(R) \subseteq \OV(R)$ of $R$-points is an equality.  This implies a canonical isomorphism of pointed sets $H^1_\fl(\Z,\widetilde{\un{{\textbf{O}}}_q}) = H^1_\fl(\Z,\un{\textbf{O}}_q)$.  In the sequel we identify these sets without further comment.
\end{remark}

\begin{lem} \label{TFAE}
Let ${^P} \OV$ be the twisted form of $\OV$ by an $\OV^+$-torsor $P$ and let $\un{Q}:=\widetilde{\OV}/\OV^+$. 
Then the following are equivalent: 
\begin{itemize}
\item[(1)] The push-forward map $H^1_\fl(\Z,\OV^+) \xrightarrow{i_*} H^1_\fl(\Z,\widetilde{\OV})$ is injective. 
\item[(2)] The quotient map $\widetilde{{^P} \OV}(\Z) \xrightarrow{\pi} \un{Q}(\Z)$ is surjective for any $[P] \in H^1_\fl(\Z,\OV^+)$. 
\item[(3)] The $\un{Q}(\Z)$-action on $H^1_\fl(\Z,\OV^+)$ is trivial. 
\end{itemize} 
\end{lem}

\begin{proof}
By the correspondence discussed above between quadratic $\Z$-forms and $\OV$-torsors,
the inner form ${^P} \OV$ of $\OV$ is the orthogonal group of some quadratic $\Z$-form $q^\prime$. 
Consider the commutative diagram with exact rows (cf.~\cite[Lemme~III.3.3.4]{Gir})
$$\xymatrix{
\widetilde{\OV}(\Z)    \ar[r]^{\pi} & \un{Q}(\Z) \ar[r] & H^1_\fl(\Z,\OV^+)    \ar[d]_{\cong}^{\theta_P} \ar[r]^{i_*} & H^1_\fl(\Z,\widetilde{\OV}) \ar[d]^{\theta_P}_{\cong} \\
\widetilde{{^P} \OV}(\Z) \ar[r]^{{^P}\pi} & \un{Q}(\Z) \ar[r] & H^1_\fl(\Z,{^P} \OV^+) \ar[r]^{i_*'}                          & H^1_\fl(\Z,\widetilde{{^P} \OV}), 
}$$
where the top row arises by applying flat cohomology to the sequence $1 \to \un{\mathbf{O}}_q^+ \to \widetilde{\un{\mathbf{O}}_q} \to \un{Q} \to 1$, whereas the bottom row comes from the analogous sequence for $q^\prime$ and the maps $\theta_P$ are the induced twisting bijections defined above. \\
%
$(1) \Leftrightarrow (2)$: The map $i_*$ is injective if any class $[P]$ of $\OV^+$-torsors is the unique pre-image of $i_*([P]) \in H^1_\fl(\Z,\widetilde{\OV}) = H^1_\fl(\Z,\OV)$.   
By commutativity of the diagram, this is equivalent to the distinguished point in $H^1_\fl(\Z,{^P} \OV^+)$ being the unique pre-image of its image,     
for any choice of a twisted form ${^P} \OV^+$ of $\OV^+$, i.e. to the triviality of $\ker(i_*')$ for any $\OV^+$-torsor $P$. 
By exactness of the rows, this is condition~$(2)$.  \\
$(1) \Leftrightarrow (3)$: 
By~\cite[Prop.~III.3.3.3(iv)]{Gir}, $i_*$ induces an injection of $H^1_\fl(\Z,\OV^+) / \un{Q}(\Z)$ into $H^1_\fl(\Z,\widetilde{\OV})$. 
Thus $i_*:H^1_\fl(\Z,\OV^+) \to H^1_\fl(\Z,\widetilde{\OV})$ is injective if and only if $\un{Q}(\Z)$ acts on $H^1_\fl(\Z,\OV^+)$ trivially.      
\end{proof}

\subsection{Case (I)}
Recall that Case (I) means that $d \equiv 1 \, \mathrm{mod} \, 4$ and $m$ is odd.
In this case the quotient $\widetilde{\Ob}_{d,m} / \Ob_{d,m}^+$ is $\un{\Z / 2}$ by Proposition~\ref{prop:quotient}, the quotient map being the Dickson morphism $D_{q_{d,m}}$.
%

\begin{prop} \label{H1 1 mod 4}
In Case (I), there is an equality of abelian groups $H^1_\fl(\Z,\N_{d,m}) = H^1_\fl(\Z,\un{\bf{O}}_{d,m})$. 
\end{prop}   

\begin{proof}
Applying flat cohomology to the short exact sequence of flat $\Z$-schemes
\begin{equation*}
1 \to \Ob_{d,m}^+ \to \widetilde{\Ob}_{d,m} \to \un{Q}_{d,m} \to 1 
\end{equation*}
gives rise to a long exact sequence of pointed sets
\begin{equation} \label{LES}
\widetilde{\Ob}_{d,m}(\Z) \to \un{Q}_{d,m}(\Z) \to H^1_\fl(\Z,\Ob_{d,m}^+) \stackrel{i_*}{\to} H^1_\fl(\Z,{\Ob}_{d,m}) \stackrel{\delta}{\to} H^1_\fl(\Z,\un{Q}_{d,m}).  
\end{equation}   

We will show that $i_*$ is an isomorphism; the claim then follows by Lemma~\ref{N = O+}.  
Since $\un{\Z/2}$ is smooth, the rightmost term in~\eqref{LES} coincides with $H^1_\et(\Z,\un{\Z/2})$ by~\cite[Corollaire~VIII.2.3]{SGA4}, 
{which classifies \'etale quadratic covers of $\Z$ (cf. \cite[Chap.III, Prop.4.1.4]{Knus}). 
As no such non-trivial cover exists, $\dl$ is trivial, and $i_*$ is surjective.} 

To prove that $i_*$ is injective, it suffices by Lemma~\ref{TFAE} to show that the map 
$\widetilde{{^P}\Ob}_{d,m}(\Z) \rightarrow \un{\Z/2}(\Z)$ is surjective for all $[P] \in H^1_\fl(\Z,\Ob_{d,m}^+)$. 
This is true when $[P]$ is the distinguished class, since we verified explicitly in the course of the proof of Proposition~\ref{prop:quotient} that $D_{q_{d,m}} : \widetilde{\Ob}_{d,m}(\Z) \to \un{\Z / 2}(\Z)$ is surjective.  In fact, this implies that the determinant map $ \widetilde{\Ob}_{d,m}(\Z) \to \un{\mu}_2(\Z) = \{ \pm 1 \}$ is surjective on $\Z$-points.  
In general, ${^P}\Ob_{d,m}^+$ is an inner form of $\Ob_{d,m}^+$ by Remark~\ref{rem:inner.form}, 
and thus it is a conjugate of $\Ob_{d,m}^+$ by some $M \in \textbf{GL}_2(R)$ for a finite flat extension $R / \Z$.     
But ${^P}\Ob_{d,m}^+$ is the special orthogonal group of a quadratic $\Z$-form $q$ and $\un{\textbf{O}}_q = M\Ob_{d,m} M^{-1}$.  
Conjugation preserves the determinant, so $\widetilde{\OV} (\Z) \stackrel{\det}{\to} \un{\mu}_2(\Z)$ is surjective, 
and hence so is $D_q: \widetilde{\OV}(\Z) \rightarrow \un{\Z / 2}(\Z)$.
\end{proof}

\subsection{Case (II)}
In this case, we know from Proposition~\ref{prop:quotient} that the determinant map induces an isomorphism $\widetilde{\Ob}_{d,m} / \Ob_{d,m}^+ \simeq \un{\mu}_2$.  
The relevant bit of the long exact sequence~\eqref{LES} is: 
\begin{equation*} 
H^1_\fl(\BZ,\Ob_{d,m}^+) \stackrel{i_*}{\to} H^1_\fl(\BZ,\Ob_{d,m}) \stackrel{\text{disc}}{\surj} H^1_\fl(\BZ,\un{\mu}_2) \cong \{ \pm 1\}.
\end{equation*}    
Here $\text{disc}$, which assigns to any class $[q] \in H^1_\fl(\BZ,\Ob_{d,m})$ the sign of the discriminant of $q$, 
is surjective because $[(1,0,c_{d,m})],[(1,0,-c_{d,m})] \in H^1_\fl(\Z,\Ob_{d,m})$; observe that 
$(1,0,c_{d,m})$ becomes isomorphic to $(1,0,-c_{d,m})$ over $\Z[\sqrt{-1}]$ by the isometry $A = \diag(1,\sqrt{-1})$.

{\begin{lem} \label{H1 2,3 mod 4} 
Suppose that Case (II) holds, so $q=(1,0,-c_{d,m})$, i.e.~$q(x,y) = x^2 - c_{d,m}y^2$.  Then  
\begin{equation} 
H^1_\fl(\Z,\Ob_{d,m}) = H^1_\fl(\Z,\Ob_{d,m}^+)/\un{\mu}_2(\Z) \coprod{H^1_\fl(\Z,\un{\mathbf{O}}^+_{(1,0,c_{d,m})})/\un{\mu}_2(\Z)}, 
\end{equation}
where the non-trivial element of $\un{\mu}_2(\Z)$ maps $[(a,b,c)]$ to $[(a,-b,c)]$.  
\end{lem}}

\begin{proof}
The action of the non-trivial element of $\un{\mu}_2(\Z) = \{ \pm 1\}$ on $H^1_\fl(\Z,\Ob_{d,m}^+)$ is described by~\cite[Remarque~III.3.3.2]{Gir}.
The pre-image of $-1$ under $\det: \widetilde{\Ob}_{q,m} \to \un{\mu_2}$ is $\Ob_{d,m}^- := \diag(1,-1) \Ob_{d,m}^+$. 
A twisted form of $\Ob_{d,m}^+$ by some $\Ob_{d,m}^+$-torsor is $\un{\textbf{O}}^+_{q'}$ where $q'$ is of discriminant $4c_{d,m}$. 
Then the action of $-1$ on $\un{\textbf{O}}^+_{q'}$ is a twist by $\Ob_{d,m}^-$, 
which is equivalent to the twist of $q'=(a,b,c)$ by $\diag(1,-1)$ to $(a,-b,c)$:   
$$  
\left( \begin{array}{cc}
    1 & 0  \\ 
    0 & -1  \\ 
	\end{array}\right) 
	\left( \begin{array}{cc}
    a   & b/2  \\ 
    b/2 & c  \\ 
	\end{array}\right) 
	\left( \begin{array}{cc}
    1 &  0  \\ 
    0 & -1  \\ 
	\end{array}\right)^t 
	=
	\left( \begin{array}{cc}
    a    & -b/2  \\ 
    -b/2 & c  \\ 
	\end{array}\right).  $$  
Observe that $\widetilde{\Ob}_{d,m}$ may be realized as a semi-direct product $\Ob_{d,m}^+ \rtimes \un{\mu}_2$ 
by means of the section $x \mapsto \diag(1,x) $ of the quotient map $\widetilde{\Ob}_{d,m} \stackrel{\det}{\to} \un{\mu}_2$.  
By \cite[Lemma~2.6.3]{Gil}, this implies the claimed decomposition 
\begin{equation} 
H^1_\fl(\Z,\Ob_{d,m}) = H^1_\fl(\Z,\widetilde{\Ob}_{d,m}) = H^1_\fl(\Z,\un{\textbf{O}}^+_{(1,0,c_{d,m})})/\un{\mu}_2(\Z) \coprod{H^1_\fl(\Z,\un{\textbf{O}}^+_{(1,0,-c_{d,m})})/\un{\mu}_2(\Z)}, 
\end{equation}
where the quotients are taken modulo the equivalence relation given by the action of $\un{\mu}_2(\Z)$ on each group as above.  
Indeed, the twisted form $(1,0,c_{d,m})$ of discriminant $-4c_{d,m}$ corresponds to the non-trivial $\un{\mu}_2$-torsor represented by $\{t^2=-1\}$, 
which splits over $\Z[\sqrt{-1}]$ and is represented by 
$$ 
\left( \begin{array}{cc}
    1 & 0  \\ 
    0 & \sqrt{-1}  \\ 
	\end{array}\right)  
	\left( \begin{array}{cc}
    1 &  0  \\ 
    0 & -c_{d,m}  \\ 
	\end{array}\right) 
	\left( \begin{array}{cc}
    1 & 0  \\ 
    0 & \sqrt{-1}  \\ 
	\end{array}\right)^t
	=
	\left( \begin{array}{cc}
    1 & 0  \\ 
    0 & c_{d,m}  \\ 
	\end{array}\right).\qedhere
$$ 
\end{proof}

\begin{cor}
Each of the groups $H^1_\fl(\Z,\Ob_{d,m}^+)$ and $H^1_\fl(\Z,\un{\mathbf{O}}^+_{(1,0,c_{d,m})})$ is entirely embedded in $H^1_\fl(\Z,\Ob_{d,m})$
if and only if it satisfies one (hence all) of the conditions of Lemma~\ref{TFAE}. 
\end{cor}

\begin{remark} \label{N'}
Although the groups $\widetilde{\Ob}_{d,m}$ and $\widetilde{\Ob}_{(1,0,c_{d,m})}$ are not isomorphic, Lemma~\ref{H1 2,3 mod 4} provides the same decomposition of $H^1_\fl(\Z,\un{\mathbf{O}}_{d,m})$ as of $H^1_\fl(\Z,\un{\mathbf{O}}_{(1,0,c_{d,m})})$.  Hence these two pointed sets are in bijection with each other.  Observe that
$$ \widetilde{\Ob}_{(1,0,c_{d,m})} =
\begin{cases}
\widetilde{\Ob}_{-d, m/2} &: d \equiv 1 \, \mathrm{mod} \, 4, \, m \, \mathrm{even} \\
\widetilde{\Ob}_{-d,m} &: d \equiv 2 \, \mathrm{mod} \, 4 \\
\widetilde{\Ob}_{-d, 2m} &: d \equiv 3 \, \mathrm{mod} \, 4.
\end{cases}
$$ 
\end{remark}


\begin{example} 
In this and the subsequent examples in this section, we set $\N^\prime_d = \N_{d, 2}$ for brevity.
The set $H^1_\fl(\Z,\N_{11})$ contains $2h_{11}=2$ classes $\{[\pm(1,0,-11)]\}$; see Lemma~\ref{H1 Nd and Pic+} below. Each of these classes is a separate $\un{\mu}_2(\Z)$-orbit.
However, $H^1_\fl(\Z,\N^\prime_{-11})$ contains six classes by~\cite[p.20]{Bue}; see the table in Example~\ref{ex:table} below.  
Precisely, we have 
$H^1_\fl(\Z,\N^\prime_{-11}) = \{[\pm(1,0,11)],[\pm(3,\pm 2,4)]\}$.  
The pairs 
$(3,\pm 2,4)$ and $(-3, \pm 2, -4)$ coincide in {$H^1_\fl(\Z,\un{\mathbf{O}}_{11})$. 
Thus $|H^1_\fl(\Z,\un{\mathbf{O}}_{11})|=2+4=6$}.    
\end{example}

Our next aim is to use Lemma~\ref{H1 2,3 mod 4} to study $\Pic \Z[\sqrt{d}]$ even in cases where $\Z[\sqrt{d}]$ is not the maximal order of a number field and thus need not be a Dedekind domain.  As a preliminary, we record the following exercise in algebraic number theory.

\begin{lem} \label{lem:ideals.2}
Let $d$ be any integer.  The ring $\Z [\sqrt{d}]$ has a unique prime ideal containing $2$.
\end{lem}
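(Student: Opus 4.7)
The plan is to reduce the problem to a computation in the two-dimensional $\mathbb{F}_2$-algebra $\mathbb{Z}[\sqrt{d}] / 2 \mathbb{Z}[\sqrt{d}]$, since the prime ideals of $\mathbb{Z}[\sqrt{d}]$ containing $2$ are in bijection (by the correspondence theorem) with the prime ideals of the quotient.

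First, I would identify $\mathbb{Z}[\sqrt{d}]$ with $\mathbb{Z}[x]/(x^2 - d)$, so that
\[ \mathbb{Z}[\sqrt{d}] / 2 \mathbb{Z}[\sqrt{d}] \cong \mathbb{F}_2 [x] / (x^2 - \bar{d}), \]
where $\bar{d}$ denotes the reduction of $d$ modulo $2$. Then I would split into the two possible cases for the parity of $d$. If $d$ is even, the polynomial $x^2 - \bar{d}$ becomes $x^2$; if $d$ is odd, it becomes $x^2 - 1 = (x-1)^2$ in $\mathbb{F}_2[x]$. In either case, the reduction has a unique irreducible factor, so $\mathbb{F}_2[x]/(x^2 - \bar{d})$ is an Artinian local ring whose unique maximal (equivalently, prime) ideal is generated by $\bar{x}$ in the even case and by $\overline{x - 1}$ in the odd case.

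Consequently, $\mathbb{Z}[\sqrt{d}] / 2 \mathbb{Z}[\sqrt{d}]$ possesses a unique prime ideal, which pulls back to the unique prime ideal of $\mathbb{Z}[\sqrt{d}]$ containing $2$: this is $(2, \sqrt{d})$ when $d$ is even and $(2, 1 + \sqrt{d})$ when $d$ is odd.

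There is no real obstacle here; the argument is essentially the observation that $x^2 - d$ always has a double root modulo $2$, making the prime $(2)$ totally ramified in $\mathbb{Z}[\sqrt{d}]$ regardless of whether this order is the maximal order of $\mathbb{Q}(\sqrt{d})$ or not.
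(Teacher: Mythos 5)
Your proof is correct and takes a genuinely different, and in fact more uniform, route than the paper. The paper first disposes of $d \in \{0,1\}$, then reduces (somewhat glibly) to square-free $d$, and then argues by cases on $d$ modulo $4$ and $8$: for $d \equiv 2,3 \pmod 4$ it invokes ramification of $2$ in the Dedekind domain $\mathcal{O}_d$, while for $d \equiv 1 \pmod 4$ it uses the integral extension $\mathcal{O}_d / \Z[\sqrt{d}]$, lying over, and the splitting behavior of $2$ (inert when $d \equiv 5 \pmod 8$, split when $d \equiv 1 \pmod 8$, in which case both primes above $2$ contract to the same maximal ideal). Your argument bypasses all of this: reducing $\Z[x]/(x^2-d)$ modulo $2$ gives $\mathbb{F}_2[x]/\bigl((x-\bar{d})^2\bigr)$, a local Artinian ring, so there is exactly one prime over $2$ by the correspondence theorem. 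This buys you an argument with no case analysis on $d$ modulo $8$, no input from algebraic number theory, and — unlike the paper's proof — a treatment that applies verbatim to non-square-free $d$, which the lemma's hypothesis (``any integer'') actually demands. The one point you should patch is the degenerate case where $d$ is a perfect square: there $\sqrt{d} \in \Z$, so $\Z[\sqrt{d}] = \Z$ and the presentation $\Z[x]/(x^2-d)$ is no longer faithful (the evaluation map has larger kernel); the lemma is then trivially true since $(2)$ is the unique prime of $\Z$ containing $2$, but this case must be split off before writing $\Z[\sqrt{d}] \cong \Z[x]/(x^2-d)$.
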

\begin{proof}
The case $d \in \{ 0, 1 \}$ is obvious, so we assume that it does not hold.  We may assume without loss of generality that $d$ is square-free.  If $d \equiv 2,3 \,(\text{mod}\, 4)$, then $\Z[\sqrt{d}] = \mathcal{O}_d$ is a Dedekind domain and $2$ ramifies in $\Q ( \sqrt{d})$, so that $2 \mathcal{O}_d = \mathfrak{p}^2$, where $\mathfrak{p}$ is the unique prime ideal of $\mathcal{O}_d$ dividing $(2)$.  Now suppose that $d \equiv 1 \, (\text{mod}\, 4)$.  Then $\mathcal{O}_d / \Z[\sqrt{d}]$ is an integral extension of rings, so by~\cite[Theorem~9.3]{Mat} any prime ideal of $\Z[\sqrt{d}]$ has the form $\Z[\sqrt{d}] \cap \mathfrak{p}$, where $\mathfrak{p}$ is a prime ideal of $\mathcal{O}_d$.  If $d \equiv 5 \, (\text{mod}\, 8)$, then $2$ is inert in $\Q(\sqrt{d})$.  Thus $2 \mathcal{O}_d$ is prime and is the unique prime ideal of $\mathcal{O}_d$ containing $2$; this implies our claim by the previous observation.  If $d \equiv 1 \, (\text{mod}\, 8)$, then $2 \mathcal{O}_d = \fp_1 \fp_2 = \fp_1 \cap \fp_2$ for distinct prime ideals $\fp_1$ and $\fp_2$ of $\mathcal{O}_d$.  Hence $\Z[\sqrt{d}] \cap \fp_1$ and $\Z[\sqrt{d}] \cap \fp_2$ each contain $I = \Z[\sqrt{d}] \cap 2 \mathcal{O}_d = \{ a + b\sqrt{d}: a \equiv b \, (\text{mod}\, 2) \}$.  Since $I$ has index $2$ in the ring $\Z[\sqrt{d}]$ and thus is a maximal ideal, it is the unique prime ideal of $\Z[\sqrt{d}]$ containing $2$.
\end{proof}

\begin{prop} \label{order Picard group}
Let $d \equiv 3 \, (\mathrm{mod}~4)$.  Set $\eta(d) = 1$ if $d \equiv 3 \, (\mathrm{mod}~8)$ and one of the following two conditions holds:
\begin{itemize}
\item $d > 3$
\item $d < 0$ and $\mathcal{O}_{-d}^\times \subset \Z[\sqrt{-d}]$.
\end{itemize}
Otherwise, set $\eta(d) = 0$.  Then $|\Pic \Z[\sqrt{-d}]| = 3^{\eta(d)}  h_{-d}$.
\end{prop}

\begin{proof}
Observe that $-d \equiv 1 \, \mathrm{mod} \, 4$, and thus $\Z[\sqrt{-d}] = \CO_{-d,2}$.  We write $\Om$ for $\Om_{-d,2} = \{ 1, \sqrt{-d} \}$.  

The claim is clear if $d = -1$, so suppose that $d \equiv 3 \, (\text{mod} \, 4)$ and $d \neq -1$. 
By Lemma~\ref{N = O+}, the special orthogonal group $\N_{-d,2}$ 
is equal to $\un{\mathbf{O}}_\Om^+$.  
Set $k' = \Q(\sqrt{-d})$.
We relate the Picard groups of $\CO_\Omega = \Z[\sqrt{-d}]$ and $\CO_{-d,1} = \CO_{k'} = \Z[\fc{1+\sqrt{-d}}{2}]$ by studying their localizations. 
For any prime ideal $\fp$ of $\CO_\Omega$, let $\CO_\fp$ be the localization of $\CO_\Omega$ at $\fp$, 
and let $(\CO_{k'})_\fp$ be the integral closure of $\CO_\fp$ in $\CO_{k'}$.  
Then~\cite[Theorem~5.6]{KP} provides an exact sequence of abelian groups 
\begin{equation} \label{Pics} 
1 \to \CO_\Omega^\times \stackrel{\varphi}{\to} 
\CO_{k'}^\times \to \bigoplus_\fp (\CO_{k'})_\fp^\times / \CO_\fp^\times \to \Pic \CO_\Omega \to \Pic \CO_{k'} \to 1. 
\end{equation}
Here the direct sum in the middle runs over the prime ideals of $\CO_\Omega$.
Let $\F$ denote the conductor of $\CO_{k'}/\CO_\Omega$. 
By~\cite[Proposition 6.2]{KP} we have the following isomorphism for any $\fp$: 
\begin{equation} \label{iso localizations ratio}
(\CO_{k'})_\fp^\times / \CO_\fp^\times \cong ((\CO_{k'})_\fp / \F \cdot (\CO_{k'})_\fp)^\times \Big/ (\CO_\fp / \F \CO_\fp)^\times.  
\end{equation} 
Since $\CO_\Omega = \Z + 2\CO_{k'}$, the conductor is $\F = 2\CO_{k'}$. 
It is a maximal ideal of $\CO_\Omega$; 
since localization at any prime commutes with factorization modulo $\F$, we have $(\CO_\fp / \F \CO_\fp)^\times = (\CO_\Omega / \F \CO_\Omega)_\fp^\times =\BF_2^\times=1$. 
Moreover, we see that $(\CO_{k'})_\fp^\times \cong \CO_\fp^\times$ if $2 \not\in \fp$, 
so such places make no contribution to the direct sum in~\eqref{Pics}.    
It remains therefore to compute $(\ov{\CO}_{k'})_{\fq}^\times$ for the unique (by Lemma~\ref{lem:ideals.2}) 
place $\fq$ containing $2$, where $\ov{\CO}_{k'}$ denotes the reduction of $\CO_{k'}$ modulo $\F$.  
Note that $\mathcal{O}_\Omega / \F \simeq \BF_2$ and that $c = \frac{1+d}{4}$ is odd if $d \equiv 3 \,(\text{mod}\, 8)$ and even if $d \equiv 7 \, (\text{mod}\, 8)$. 
If $\bar{c} \in \mathbb{F}_2$ is the image of $c$, then it follows from~\eqref{Ad} that  
\begin{align*} 
(\ov{\CO}_{k'})_\fq^\times \cong (\ov{R}_{-d}(\BF_2))_\fq & =
\left \{ 
(\ov{A}_{-d})_\fq = 
\left( \begin{array}{cc}
    a & \bar{c} b   \\ 
    b & a+b  \\ 
	\end{array}\right) : \, a,b \in \BF_2, \, \det (\ov{A}_{-d}) \neq 0 
\right\} \\ \nonumber
&\cong \left \{ \begin{array}{l l}
    \Z/3  & d \equiv 3 \, (\text{mod}\, 8)  \\ 
    1     & d \equiv 7 \, (\text{mod}\, 8). 
\end{array}\right. 
\end{align*} 
This holds also when $d = -1$.
We deduce from~\eqref{Pics} that if $d \equiv 7 \, (\text{mod}\, 8)$, then $\Pic \mathcal{O}_\Omega \simeq \Pic \mathcal{O}_{-d}$.
If $d \equiv 3 \, (\text{mod} \, 8)$, then~\eqref{Pics} implies that
\begin{equation} \label{Pic ratio}
\fc{|\Pic(\CO_\Omega)|}{h_{-d}} = \fc{3}{|\cok(\p)|}.   
\end{equation}  
If $d \neq 3$, the unit groups $\CO_\Omega^\times$ and $\CO_{k'}^\times$ contain the same roots of unity. 
Moreover, if $d>0$, these groups have no free part, hence $| \cok(\p)|=1$.  If $d = 3$, then clearly $| \cok(\p)|=3$.
If $d<0$, then the free parts of both $\CO_\Omega^\times$ and $\CO_{k'}^\times$ have rank $1$, and $\cok(\p)=[\langle \ve \rangle : \langle\ve^m \rangle]=m$, 
where $\ve$ is a fundamental unit of $k'$ and $\ve^m$ is a generator of $\CO_\Omega^\times$. 
Note that $m | 3$ by~\eqref{Pics}, so that $m = 3$ or $m = 1$.  Both cases do arise.  The case $m = 1$ occurs, for instance, when $d = -37$ and $d = -101$; see sequence A108160 in the {\emph {On-Line Encyclopedia of Integer Sequences}}.  Hence, recalling the definition of $\eta(d)$ from the statement of this claim, we may rewrite~\eqref{Pic ratio} as
\begin{equation*} \label{equ:pic.ratio.new}
\fc{|\Pic(\CO_\Omega)|}{h_{-d}}  = 3^{\eta(d)}. \qedhere
\end{equation*}
\end{proof}

\begin{remark} \label{rmk:norm.equality} 
If the norm map $\Nr$ attains the value $-1$ for an element of $\CO_\Omega^\times$, it does so for a generator of its free part. 
As $m$ is odd, this implies that $\Nr(\CO_{k'}^\times) = \Nr(\CO_\Omega^\times)$. 
Recalling that $\N_d^\prime = \N_{d,2}$, we conclude by~\eqref{N and Pic(Ok)} 
and Corollary~\ref{order Picard group} that 
\begin{equation} 
\fc{|H^1_\fl(\Z,\N_{-d}')|}{|H^1_\fl(\Z,\N_{-d})|} = \fc{|\Pic \Z[\sqrt{-d}]|}{h_{-d}} = 3^{\eta(d)}. 
\end{equation} 
\end{remark}
\bk

\begin{example} \label{ex:table}
We tabulate the following data from~\cite{Bue}: see page~19 for the second and fourth columns and page~20 for the third, 
noting that, as the forms obtained are definite, the number of total classes is twice the number of positive classes 
by Proposition~\ref{Gauss Z[root(d)]} below.  

\bk
\begin{center}
 \begin{tabular}{|c | c | c | c| c| c|} 
 \hline
 $0<d \equiv 3 \, (\text{mod} 4)$ & $h_{-d}$ & $|H^1_\fl(\Z,\N_{-d}')|$ & $|H^1_\fl(\Z,\N_{-d})|$ & $c_{-d,1}=\fc{1+d}{4}$ 
    \\ [0.5ex] \hline\hline
  3 & 1 &  2 & 2 & 1 \\
 \hline
  7 & 1 &  2 & 2 & 2 \\
 \hline
 11 & 1 &  6 & 2 & 3 \\
 \hline
 15 & 2 &  4 & 4 & 4 \\
 \hline
 19 & 1 &  6 & 2 & 5 \\
 \hline
 23 & 3 &  6 & 6 & 6 \\
 \hline
\end{tabular}
\end{center}
\end{example}

\begin{example} 
Let $d=-5$.  Then $(\ov{\CO}_{k'})_{(2)}^\times / \ov{\CO}_{(2)}^\times \cong \Z/3$ by the argument preceding~\eqref{Pic ratio}.  
A generator of the free part of $\CO_{k'}^\times$ is $\ve=\om=\frac{1 + \sqrt{5}}{2}$. 
Let $\Om = \{1,\sqrt{5}\}$. 
The embedding $\p:\CO_\Omega \to \CO_{k'}$ induces the embedding of $\N_{\Om}(\Z)$ in $\N_5(\Z)$ given by the integral matrix realization of~\eqref{Ad}, 
namely the {group homomorphism}
$$
\p:\left( \begin{array}{cc}
    x & 5y  \\ 
    y & x  \\ 
	\end{array}\right) 
	\mapsto 
	\left( \begin{array}{cc}
    x-y  & 2y   \\ 
    2y & x+y  \\ 
	\end{array}\right). 
$$
Since $\mathrm{Nr}(\varepsilon) = -1$, a generator of the free part of $\N_5(\Z)$ is    
$u = \left( \begin{array}{cc}
       1 & 1  \\ 
       1 & 2  \\ \end{array}\right)$, corresponding to $\ve^2 = 1 + \om$, 
while a generator of $\N_5'(\Z)$ is 
$z = \left( \begin{array}{cc}
       9 & 20  \\ 
       4 & 9  \\ \end{array}\right)$, corresponding to $\ve^6 = 9 + 4 \sqrt{5}$.    
Hence 
$$ \p(z) = \left( \begin{array}{cc}
        5 & 8  \\ 
        8 & 13 \\ \end{array}\right) = u^3,    
$$ 
so that $|\cok(\p)|=3$. This means that $\eta(d) = 0$ in Proposition~\ref{order Picard group}, and $|\Pic(\Z[\sqrt{5}])| = h_5$. 
\end{example}  

Recall from~\eqref{equ:def.cl.plus} that the set $\cl^+(n)$ classifies quadratic forms of discriminant $n$ up to proper isometry.  The next lemma relates $\cl^+(n)$ to the flat cohomology of special orthogonal groups.

\begin{lem} \label{discriminant classification}
Let $d$ be any integer that is not a perfect square and not of the form $d = -3 \cdot 4^m$ for any $m \in \mathbb{N}_0$, and set $q(x,y) = x^2 - dy^2$. Then $\cl^+(4d) = H^1_\fl(\Z,\OV^+)$. 
\end{lem}
\begin{proof} 
Note that $\Delta(q) = 4d$ need not be a fundamental discriminant.
The equivalence relations in $\cl^+(4d)$ and in $H^1_\fl(\Z,\OV^+)$ are defined similarly, 
so we only need to show that the two sets of representatives coincide. 
Indeed, those in $H^1_\fl(\Z,\OV^+)$ are obtained by (local) proper isometries of $q$, resulting in the same discriminant $\Dl(q)$.   
Conversely, we claim that any quadratic $\Z$-form $q'$ of rank $2$ with $\Dl(q')=\Dl(q)$ 
is diagonalizable over ${\Z}_p$ for any prime $p$, thus is locally isomorphic to $q$ in the fppf topology.  
This is true for all odd primes $p$ by~\cite[Theorem~8.3.1]{Cas}.  
If $p = 2$, then by the explicit form of the Jordan Decomposition Theorem~\cite[Lemma~8.4.1]{Cas} $q^\prime$ is isomorphic over $\Z_2$ to a direct product 
of forms of rank at most $2$, where the possible components of rank $2$ are of the form $2^e xy$ or $2^e (x^2 + xy + y^2)$ for $e \in \mathbb{N}_0$; note that~\cite{Cas} uses the classical definition of integral forms, requiring that $b$ be even.
None of these has discriminant $4d$ for an integer $d$ satisfying our hypotheses.
Hence $q^\prime$ is diagonalizable over $\Z_2$.
Moreover, by Lemma~\ref{H1 2,3 mod 4}  a local isometry between $q$ and $q'$      
can be taken to be proper, as they share the same discriminant.          
So any $\Z$-form with discriminant $\Dl(q)$ is locally properly isomorphic to $q$ in the flat topology. 
This completes the proof.       
\end{proof}

\begin{remark} \label{rem:bad.discriminants}
We justify the exclusion of discriminants of the form $-3 \cdot 4^m$, for $m \geq 1$, in the hypotheses of Lemma~\ref{discriminant classification} by noting that the lemma is false in those cases.  Indeed, the form $(2^m, 2^{m-1}, 2^m)$ has discriminant $-3 \cdot 4^m$, but one readily checks that it is not isometric to $(1, 0, 3 \cdot 4^{m-1})$ over any finite flat extension of $\Z_2$.  Thus the class $[(2^m, 2^{m-1}, 2^m)]$ appears in $\cl^+(-3 \cdot 4^m)$ but not in $H^1_\fl(\Z, \un{\mathbf{O}}_{(1,0,3 \cdot 4^{m-1})}^+)$.  Indeed, $\Pic \Z[\sqrt{-3}]$ is trivial; see, for instance,~\cite[Exercise~7.9]{Cox}.  It follows from~\eqref{N and Pic(Ok)} and Lemma~\ref{N = O+} that $|H^1_\fl (\Z, \un{\mathbf{O}}_{(1,0,3)}^+)| = 2$; alternatively, see the data of Example~\ref{ex:table}.  Thus $H^1_\fl (\Z, \un{\mathbf{O}}_{(1,0,3)}^+) = \{ [\pm(1,0,3)] \}$.
\end{remark}

\subsection{Applications to the classification of binary quadratic forms}
Having studied Cases (I) and (II)
separately,
we gather together our results.  First we determine the structure of $\cl(D)$ for many integers $D$.

\begin{prop} \label{pro:cl.case.i}
Let $D \neq -3$ be an integer such that $D \equiv 1 \, \mathrm{mod} \, 4$ and $D$ is not a perfect square.  Set $q(x,y) = x^2 + xy + ((1 - D)/4) y^2$.  Then $\cl(D) = \cl^+(D) = H^1_\fl(\Z,\OV^+)$.
\end{prop}
\begin{proof}
The same argument as in the proof of Lemma~\ref{discriminant classification} shows that $\cl^+(D) = H^1_\fl(\Z,\OV^+)$.  Write $D = dm^2$, where $d \equiv 1 \, \mathrm{mod} \, 4$ is square-free and $m$ is odd.  Then $q = q_{d,m}$.  Moreover, by Lemma~\ref{N = O+} and Proposition~\ref{H1 1 mod 4} we have $H^1_\fl(\Z, \OV^+) = H^1_\fl(\Z, \un{N}_{d,m}) = H^1_\fl(\Z, \Ob_{d,m})$.  Thus any quadratic form that is improperly isomorphic to $q$ is also properly isomorphic to $q$, hence $\cl(D) = \cl^+(D)$.
\end{proof}

\begin{prop} \label{pro:cl.case.ii}
Let $D$ be any integer that is not a perfect square and not of the form $D = -3 \cdot 4^\ell$ for any $\ell \in \mathbb{N}_0$.  Set $q(x,y) = x^2 - Dy^2$.  Then $\cl(4D) = H^1_\fl(\Z, \OV^+)/ \un{\mu}_2(\Z)$, where the non-trivial element of $\un{\mu}_2(\Z)$ maps $[(a,b,c)]$ to $[(a,-b,c)]$.
\end{prop}
\begin{proof}
We have $\cl^+(4D) = H^1_\fl(\Z, \OV^+)$ by Lemma~\ref{discriminant classification}.  The description of the structure of $H^1_\fl(\Z, \OV)$ provided by Lemma~\ref{H1 2,3 mod 4} shows that the only proper isomorphism classes of forms {\emph{of discriminant $4D$}} that are improperly isomorphic to each other are those in the same orbit of the $\un{\mu}_2(\Z)$-action.
\end{proof}
 
Recall that $k = \Q(\sqrt{d})$.
As noted in the introduction, 
Gauss proved in his {\emph{Disquisitiones Arithmeticae}} \cite{Gau} 
that the elements of $\cl^+(\Dl(q_d))$, namely proper isomorphism classes of forms of discriminant $\Dl_k$, 
are parametrized by $\mathrm{Pic}^+(\CO_k)$.  See, for instance, \cite[Theorem~58]{FT} for an exposition of this result. 
If $d<0$, his classification treats only the positive definite forms, namely those for which $a, c > 0$.  
The following claim completes the proper classification.

\begin{thm} \label{H1 Nd and Pic+}
If $d \not\in \{ 0, 1 \}$ is a square-free integer, then 
$$ \cl^+(\Dl(q_d)) = H^1_\fl(\Z,\Od^+) \cong \{\pm 1\}^{\mu(d)} \times \mathrm{Pic}^+(\CO_k), \ \text{where} \  
\mu(d) =
\left \{ 
\begin{array}{l l}
1 & d<0 \\ 
0 & d>0.
\end{array}\right.$$
\end{thm}

\begin{proof} 
By Lemma~\ref{N = O+} we have 
$H^1_\fl(\Z,\N_d) = H^1_\fl(\Z, \Od^+)$, and the latter 
properly classifies the integral quadratic forms 
that are locally isomorphic to $q_d$ for the flat topology, 
thus of discriminant $\Delta_k$.  
So if $d>0$, then $H^1_\fl(\Z,\N_d)$ injects into $\cl^+(\Dl_k) = \cl^+(\Dl(q_d))$,
which is in bijection with $\mathrm{Pic}^+(\CO_k)$ by the classical theorem of Gauss mentioned above.
Since $H^1_\fl(\Z,\N_d)$ and $\mathrm{Pic}^+(\CO_k)$ have the same cardinality by~\eqref{mu quadratic}, we have obtained a natural bijection between them.

If $d<0$, however, then $\mathrm{Pic}^+(\CO_k)$ classifies only the {positive} definite forms; 
see the proof of~\cite[Theorem~58]{FT}.  
The subset $H^1_\fl(\Z,\N_d)^+$ of classes of positive forms injects into $\mathrm{Pic}^+(\CO_k) = \Pic \CO_k$ as above. 
If $[q] \in H^1_\fl(\Z,\N_d)$, then the isometry $\diag(\sqrt{-1},\sqrt{-1})$ shows that $[-q]$ belongs to $H^1_\fl(\Z,\Od)$.  
Thus if $d \equiv 1 \, (\text{mod} \, 4)$, then $[-q] \in H^1_\fl (\Z,\N_d)$ by Proposition~\ref{H1 1 mod 4}.
This remains true also in the case $d \equiv 2,3 \, (\text{mod} \, 4)$ by the decomposition of Lemma~\ref{H1 2,3 mod 4}, since $\disc(-q)=\Delta_k$.     
Furthermore, since $q$ realizes only non-negative values and $-q$ realizes non-positive values, 
the two forms $q$ and $-q$ cannot be $\Z$-equivalent. 
Since every definite form is positive or negative, we have $\{\pm 1\} \times H^1_\fl(\Z,\N_d)^+ = H^1_\fl(\Z,\N_d)$,
and we have just shown that this
injects into {$\{\pm 1\} \times \mathrm{Pic}^+(\CO_k) = \cl^+(\Dl(q_d))$}. 
Again by~\eqref{mu quadratic}, these sets have the same cardinality, so our injection is a bijection.  
\end{proof}

The following statement generalizes the result of Gauss mentioned above  
to cases in which $4d$ is not a fundamental discriminant, and, when $d < 0$, to forms that need not be positive definite.  It is also classical; see, for instance,~\cite[Exercise~7.23]{Cox}.  We prove it by modern methods, at the price of ruling out discriminants of the form $-3 \cdot 4^\ell$.  The theorem remains true in this case, but Remark~\ref{rem:bad.discriminants} shows that our cohomological argument must be modified in order to treat it.
If $D$ is an integer that is not a perfect square, then define
\begin{align} \label{def:epsilon.tilde}
\tilde{\ve}(D) &=
\left \{ \begin{array}{l l}
0 & D > 0 \ \text{and} \ \mathrm{Nr}(\Z[\sqrt{D}]^\times) = \{ \pm 1 \} \\
1 & \text{otherwise.} \\
\end{array}\right. 
\end{align} 

\begin{thm} \label{Gauss Z[root(d)]}  
For any integer $D$ that is not a perfect square and not of the form $D = -3 \cdot 4^\ell$ for any $\ell \in \mathbb{N}_0$,
there is a bijection  
$$ \cl^+(4D) \cong \{\pm 1\}^{\tilde{\ve}(D)} \times \mathrm{Pic}(\Z[\sqrt{D}]). $$
Moreover, if $d \not\in \{ 0,1, -3 \}$ is square-free, then
$ \cl^+(4d) \cong \{\pm 1\}^{\ve(d)} \times \mathrm{Pic}(\Z[\sqrt{d}]).$  
\end{thm}

\begin{proof}
Since $D$ is not a perfect square, it may be written uniquely in the form $D = d (m^\prime)^2$, where $d \neq 0,1$ is square-free and $m^\prime \in \mathbb{N}$.  Set $m = 2m^\prime$ if $d \equiv 1 \, \mathrm{mod} \, 4$ and $m = m^\prime$ otherwise.  Observe that $\Z[\sqrt{D}] = \CO_{d,m}$ and $q_{d,m} = (1,0,-D)$.  
Consider the exact sequence~\eqref{N and Pic(Ok)} for the basis $\Omega_{d,m}$: 
$$
1 \to \{\pm 1\}/\Nr(\CO_{d,m}^\times) \to H^1_\fl(\Z,\N_{d,m}) \to \Pic \Z[\sqrt{D}] \to 1.   
$$
The group $H^1_\fl(\Z,\N_{d,m})$ is equal to $H^1_\fl(\Z,\textbf{O}_{d,m}^+)$ by Lemma~\ref{N = O+}, and hence
is identified with $\cl^+(4D)$ by Lemma~\ref{discriminant classification}.  The first part of our claim is immediate.  For the second part, there is nothing further to prove if $d \equiv 2,3 \, \mathrm{mod} \, 4$; in this case, $\ve(d) = \tilde{\ve}(d)$ by definition.  If $d \equiv 1 \, \mathrm{mod} \, 4$ is square-free, then 
we observed in Remark~\ref{rmk:norm.equality} that $\Nr(\Z[\sqrt{D}]^\times) = \Nr(\CO_d^\times)$, 
so again $\ve(d) = \tilde{\ve}(d)$.  This concludes the proof. 
\end{proof}

\begin{prop} \label{H1 cardinality}
For a square-free integer $d \not\in \{ 0, 1 \}$   
let $m_d$ be the number of pairs $[(a,\pm b,c)]$ which are distinct in $\cl^{+}(\Dl(q_d))$,  
and let $l_d$ be the number of such pairs in $\cl^{+}(-\Dl(q_d))$.  
Let $h_d^+$ be the narrow class number of $\Q(\sqrt{d})$. 
Then
\begin{align*}
|H^1_\fl(\Z,\Od)| =
\left \{ \begin{array}{l l l}
2^{\mu(d)} h^+_d                                             &  d \equiv 1 \, (\text{mod} \, 4) \\ \\
2^{\mu(d)} h^+_d + 2^{\mu(-d)}          h^+_{-d} -m_d - l_d  &  d \equiv 2 \, (\text{mod} \, 4) \\ \\ 
2^{\mu(d)} h^+_d + 2^{\mu(-d)} \cdot 3^{\eta(d)} h_{-d}^+ - m_d - l_d  &  d \equiv 3 \, (\text{mod} \, 4). \\   
\end{array}\right. 
\end{align*}   
\end{prop}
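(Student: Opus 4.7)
My plan is to handle the proposition by case analysis on $d$ modulo $4$, assembling the structural and enumerative results established earlier in the paper.

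For $d \equiv 1 \, (\mathrm{mod}\, 4)$, I would invoke Corollary~\ref{H1 1 mod 4} to identify $H^1_\fl(\Z,\Od)$ with $H^1_\fl(\Z,\N_d)$, and then apply Lemma~\ref{H1 Nd and Pic+} together with~\eqref{mu quadratic} to conclude $|H^1_\fl(\Z,\Od)| = 2^{\mu(d)} h_d^+$.

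For $d \equiv 2, 3 \, (\mathrm{mod}\, 4)$, I would begin with the disjoint decomposition
\[
H^1_\fl(\Z,\Od) = (H^1_\fl(\Z,\N_d)/\sim) \coprod (H^1_\fl(\Z,\N_{-d}')/\sim)
\]
from Lemma~\ref{H1 2,3 mod 4}, so that the two cardinalities add. In each summand, the equivalence $\sim$ fuses each opposite pair $\{[(a,b,c)],[(a,-b,c)]\}$ into a single class; hence quotienting reduces the cardinality by the number of such pairs, namely $m_d$ in the first summand (via the bijection of Lemma~\ref{H1 Nd and Pic+}) and $l_d$ in the second (via Proposition~\ref{Gauss Z[root(d)]}). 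Lemma~\ref{H1 Nd and Pic+} gives $|H^1_\fl(\Z,\N_d)| = 2^{\mu(d)} h_d^+$, and Proposition~\ref{Gauss Z[root(d)]}, applied to the order $\Z[\sqrt{-d}]$ attached to the twisted form $(1,0,d)$, yields $|H^1_\fl(\Z,\N_{-d}')| = 2^{\mu(-d)} \, |\Pic^+\Z[\sqrt{-d}]|$. When $d \equiv 2\,(\mathrm{mod}\, 4)$ the order $\Z[\sqrt{-d}]$ is the maximal order of $\Q(\sqrt{-d})$ (since $-d \equiv 2\,(\mathrm{mod}\, 4)$), so $|\Pic^+\Z[\sqrt{-d}]| = h_{-d}^+$ and the stated formula drops out. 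When $d \equiv 3\,(\mathrm{mod}\, 4)$ the order is non-maximal and Corollary~\ref{order Picard group} supplies $|\Pic\Z[\sqrt{-d}]| = 3^{\eta(d)} h_{-d}$; the remaining task is to promote this to the narrow Picard as $3^{\eta(d)} h_{-d}^+$.

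This last promotion is where I expect the main difficulty. In the subcase $d > 0$, $\Q(\sqrt{-d})$ is imaginary quadratic, so narrow and ordinary Picard coincide for both the order and the maximal order, and the desired equality is automatic. The subcase $d < 0$ is subtler: $\Q(\sqrt{-d})$ is then real quadratic, and the ratio $|\Pic^+\Z[\sqrt{-d}]|/|\Pic\Z[\sqrt{-d}]|$ equals $2$ or $1$ according as $-1$ is or is not the norm of a unit of $\Z[\sqrt{-d}]$. I would exploit the equality $\mathrm{N}_{k'/\Q}(\CO_\Omega^\times) = \mathrm{N}_{k'/\Q}(\CO_{k'}^\times)$ established just after Corollary~\ref{order Picard group} to conclude that this norm-condition holds for the order precisely when it holds for the maximal order, so that $|\Pic^+\Z[\sqrt{-d}]|/|\Pic\Z[\sqrt{-d}]| = h_{-d}^+/h_{-d}$. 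Multiplying Corollary~\ref{order Picard group} by this ratio gives $|\Pic^+\Z[\sqrt{-d}]| = 3^{\eta(d)} h_{-d}^+$, and assembling the two summands (each diminished by the corresponding count of opposite pairs) produces the claimed formula.
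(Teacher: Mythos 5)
Your proof is correct and follows essentially the same route as the paper's: the $d\equiv 1\,(\mathrm{mod}\,4)$ case via Corollary~\ref{H1 1 mod 4} and~\eqref{mu quadratic}, and the $d\equiv 2,3\,(\mathrm{mod}\,4)$ cases via the decomposition of Lemma~\ref{H1 2,3 mod 4}, with the $d\equiv 3$ contribution of the non-maximal order $\Z[\sqrt{-d}]$ obtained from exactly the ingredients (Corollary~\ref{order Picard group} and the equality $\mathrm{N}_{k'/\Q}(\CO_\Omega^\times)=\mathrm{N}_{k'/\Q}(\CO_{k'}^\times)$) that the paper packages as~\eqref{i_d}. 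One trivial slip worth fixing: for real $\Q(\sqrt{-d})$ the ratio $|\mathrm{Pic}^+|/|\mathrm{Pic}|$ equals $1$ (not $2$) when $-1$ is the norm of a unit and $2$ when it is not --- the reversal is harmless here, since your argument only uses that this ratio is governed by the norm group of the units, which is the same for the order and for the maximal order.
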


\begin{proof} 
If $d \equiv 1 \, (\text{mod} \, 4)$ then
$|H^1_\fl(\Z,\Od)| = |H^1_\fl(\Z,\N_d)| 
                   = h^+_d \cdot 2^{\mu(d)}$, where the first equality is Proposition~\ref{H1 1 mod 4} and the second comes from~\eqref{mu quadratic}.
Otherwise, use Lemma~\ref{H1 2,3 mod 4} 
and notice that if $d \equiv 2 \, (\text{mod} \, 4)$, then $-d \equiv 2 \, (\text{mod} \, 4)$ as well, so $\N_{-d}'=\N_{-d}$.  
On the other hand, if $d \equiv 3 \, (\text{mod} \, 4)$, then $-d \equiv 1 \, (\text{mod} \, 4)$, and the claim follows.
\end{proof}

\begin{remark}
Let $d \equiv 2 \, \mathrm{mod} \, 4$ be square-free.  Then $|H^1_\fl(\Z,\Od)| = |H^1_\fl(\Z,\un{\mathbf{O}}_{-d})|$ by Lemma~\ref{H1 2,3 mod 4}.  However, we see from Proposition~\ref{H1 cardinality} that this observation does not readily imply any relation between the class numbers $h_d$ and $h_{-d}$.  Indeed, the class numbers of real and imaginary quadratic fields behave very differently.  For instance, it is well known that only nine imaginary quadratic fields have class number one, but the Cohen-Lenstra heuristics suggest that this should be true of infinitely many real quadratic fields.
\end{remark}

\subsection{On the principal genus theorem} 
Theorem~\ref{H1 Nd and Pic+} also implies another classical result of Gauss: the principal genus theorem.  Recall that $k = \Q(\sqrt{d})$, with $d \not\in \{ 0,1 \}$ square-free.  Then for any binary quadratic form of discriminant $\Delta_k$, the composition of $q$ with itself belongs to the principal genus, namely the genus of the norm form $q_d$.  Under the identification of Lemma~\ref{discriminant classification}, composition of quadratic forms corresponds to multiplication in the abelian group $H^1_\fl(\Z,\Od^+)$; 
we refer the reader to the discussion in~\cite[Section~V.7.3]{Knus} for details.

\begin{cor} \label{exponent 2}
Let $d \not\in \{ 0, 1 \}$ be square-free and $m \geq 0$.  For any class $[q] \in H^1_\fl(\Z,\Ob_{d,m}^+)$, the class $[q \otimes q]$ belongs to the principal genus. 
\end{cor}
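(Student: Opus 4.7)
The plan is to interpret $[q \otimes q]$ as the square of the class $[q]$ in the abelian group $H^1_\fl(\Z, \N_d)$ (using Lemma~\ref{N = O+} to identify $\Od^+$ with $\N_d$, which is commutative) and then show that doubling lands in the principal genus. Indeed, the tensor product of $\N_d$-torsors is the group operation, and on the level of quadratic forms this is the classical Gauss composition, which preserves the discriminant $\Delta_k$.

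The principal genus is by definition the set of classes that become trivial after base change to every $\Z_p$ (and to $\BR$); in the language of the paper, this is $\Cl_\iy(\N_d) \cong H^1_{\mathrm{Nis}}(\Z, \N_d)$, which sits inside $H^1_\fl(\Z, \N_d)$ as a subgroup by Remark~\ref{Nis Remark}. Since $k / \Q$ is quadratic and hence cyclic, Corollary~\ref{h(N)} supplies the short exact sequence
$$ 1 \to \Cl_\iy(\N_d) \to H^1_\fl(\Z, \N_d) \xrightarrow{\pi} \Sh^1_{S_r \cup \{\infty\}}(\Q, N) \to 1. $$
Therefore it suffices to verify that $\pi(2[q]) = 0$, i.e.~that $B = \Sh^1_{S_r \cup \{\infty\}}(\Q, N)$ has exponent dividing $2$.

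This last fact is exactly the content of the final sentence of Remark~\ref{B}: since $[k : \Q] = 2$ is prime, $B$ has exponent $2$. Thus $\pi(2[q]) = 2 \, \pi([q]) = 0$, so $2[q] \in \Cl_\iy(\N_d)$, which is the principal genus, as desired.

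The main conceptual point — and the only genuine obstacle — is the identification of $[q \otimes q]$ with $2[q]$ in the abelian group $H^1_\fl(\Z, \N_d)$. Once this is accepted (it is the standard correspondence between tensor products of torsors under a commutative group scheme and addition in $H^1$, which on the quadratic-form side recovers Gauss composition), the rest is a two-line consequence of the structural results already established in Section~3. The appeal to Remark~\ref{B} is crucial: it is the prime degree of $k/\Q$ that forces the exponent of $B$, and hence the classical statement that squares of classes land in the principal genus.
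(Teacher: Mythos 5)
Your proof is correct and follows essentially the same route as the paper's: identify $\Od^+$ with the commutative group $\N_d$, recognize the principal genus as $\mathrm{Cl}_\iy(\N_d)$ inside $H^1_\fl(\Z,\N_d)$, and use Corollary~\ref{h(N)} together with Remark~\ref{B} to see that the quotient $\Sh^1_{S_r \cup \{\iy\}}(\Q,N)$ has exponent $2$, so the square of any class lands in $\mathrm{Cl}_\iy(\N_d)$. The only cosmetic difference is that the paper spells out, via \eqref{embedding of local N} and Remark~\ref{Nis Remark}, why $\mathrm{Cl}_\iy(\N_d)$ is exactly the kernel of $H^1_\fl(\Z,\N_d) \to H^1(\Q,N)$ and hence the principal genus, a point you assert by definition.
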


\begin{proof}
To alleviate the notation, we write $\un{N}$ for $\un{N}_{d,m}$.
We have seen in~\eqref{embedding of local N} 
that {$H^1_\fl(\Z_p,\N_p)$} injects into $H^1(\Q_p,N_p)$ for any $p$. 
As a result, by~\eqref{Nis sequence} and Lemma \ref{N = O+} we obtain that 
\begin{equation*} 
\text{Cl}_\iy(\Ob_{d,m}^+) = \text{Cl}_\iy(\N)  = \ker[H^1_\fl(\BZ,\N) \to H^1(\Q,N)],  
\end{equation*}
showing that $\text{Cl}_\iy(\Ob_{d,m}^+)$ is the principal genus of $q$. 
Moreover, the quotient $H^1_\fl(\Z,\N) / \text{Cl}_\iy(\N) = \Sh^1_{S_r \cup \{\iy\}}(\Q,N)$ 
has exponent $2$ by Proposition~\ref{h(N)} and Remark~\ref{B}.  
Recall that $\N$ is commutative, so that $H^1_\fl(\Z,\N)$ is an abelian group.  Thus for any class $[q] \in H^1_\fl(\Z,\N_d)$, 
the class of the tensor product $q \otimes q$ lies in $\text{Cl}_\iy(\N) = \text{Cl}_\iy(\Ob_{d,m}^+)$. 
\end{proof}


\begin{remark} 
Proposition~\ref{h(N)} shows that $\text{Cl}_\iy(\N_{d,m})$ embeds as a subgroup in $H^1_\fl(\Z,\N_{d,m})$.  
If $d \neq -3$, then the latter group is a disjoint union of classes of integral quadratic binary forms of discriminant $\Delta(q_{d,m})$ of all genera.    
This embedding holds for any twisted form of $q_{d,m}$,   
hence the quotient $H^1_\fl(\Z,\N_{d,m}) / \text{Cl}_\iy(\N_{d,m}) \simeq \Sh^1_{S_r \cup \{\iy\}}(\Q,N_{d,m})$ 
is in bijection with the set of proper genera of $q_{d,m}$.     
Thus there are $2^{|S_r|-1}$ such proper genera, as was initially proved by Gauss; see also \cite[\S 5, Example~2]{Ono} and \cite[Corollary~16]{Wat}.  
\end{remark}\bk

\begin{appendices}
\section{Appendix: Some explicit presentations of group schemes} \label{subsec:explicit}
In this section we will write down equations cutting out the algebraic groups $\widetilde{\Ob}_{d,m}$ and $\Ob_{d,m}^+$ and use them to provide an explicit proof of Proposition~\ref{prop:quotient}, which describes their quotient.

\begin{lem} \label{pro:od.tilde.explicit}
Let $d \neq 0,1$ be square-free and $m \in \mathbb{N}$, and define the ideal $\mathcal{I}_{d,m} \subset \Z[\alpha, \beta, \gamma, \delta, t]$ of a polynomial ring in five variables as follows.  In Case (I), set
$$
\mathcal{I}_{d,m} = 
(\alpha^2 + \alpha \beta + c_{d,m} \beta^2 - 1, 2 \alpha \gamma + \alpha \delta + \beta \gamma + 2c_{d,m} \beta \delta - 1, \gamma^2 + \gamma \delta + c_{d,m} \delta^2 - c_{d,m}, (\alpha \delta - \beta \gamma)t - 1, u^2 - u),$$
where $c_{d,m}$ was defined in~\eqref{equ:c} and $u = 1 - \alpha \gamma - \beta \gamma -c_{d,m} \beta \delta$.

In Case (II), define
$
\mathcal{I}_{d,m} = (\alpha^2 - c_{d,m} \beta^2 - 1, \alpha \gamma - c_{d,m} \beta \delta, \gamma^2 - c_{d,m} \delta^2 + c_{d,m}, (\alpha \delta - \beta \gamma)t - 1, t^2 - 1).$

Then $\widetilde{\Ob}_{d,m} = \mathrm{Spec} \, \Z[\alpha, \beta, \gamma, \delta, t] / \mathcal{I}_{d,m}$.
\end{lem}
\begin{proof}
We first treat Case (II), in which $q_{d,m}(x,y) = x^2 - cy^2$; we write $c$ for $c_{d,m}$ for brevity.  Consider the matrix
$$ A = \left( \begin{array}{cc} \alpha & \beta \\ \gamma & \delta \end{array} \right).$$
The identity $q_{d,m} \circ A = q_{d,m}$ amounts to
\begin{equation*}
q_{d,m} = (\alpha x + \gamma y)^2 - c (\beta x + \delta y)^2 = (\alpha^2 - c \beta^2) x^2 + (2 \alpha \gamma - 2c \beta \delta)xy + (\gamma^2 - c \delta^2) y^2.
\end{equation*}
Equating the coefficients of $x^2$, $xy$, and $y^2$, and noting that $A$ must be invertible, we obtain
\begin{equation*}
\Ob_{d,m} = \mathrm{Spec} \, \Z [\alpha, \beta, \gamma, \delta, t] / (\alpha^2 - c \beta^2 - 1, \alpha \gamma - c \beta \delta, \gamma^2 - c \delta^2 + c, (\alpha \delta - \beta \gamma)t - 1).
\end{equation*}
Observe that the identity $c((\alpha \delta - \beta \gamma)^2 - 1) = 0$ is satisfied in $\Ob_{d,m}$.  This can be seen by taking determinants in the matrix equation $A B_{q_{d,m}} A^t = B_{q_{d,m}}$ corresponding to the condition $q_{d,m} \circ A = q_{d,m}$; alternatively, we leave it as an exercise for the reader to deduce this identity from the equations cutting out $\Ob_{d,m}$.  Therefore, $(\alpha \delta - \beta \gamma)^2 - 1 = 0$ (or, equivalently, $t^2 - 1 = 0$), in $\widetilde{\Ob}_{d,m}$.  One now checks explicitly that the ring
$\Z [\alpha, \beta, \gamma, \delta, t] / (\alpha^2 - c \beta^2 - 1, \alpha \gamma - c \beta \delta, \gamma^2 - c \delta^2 + c, (\alpha \delta - \beta \gamma)t - 1, t^2 - 1)$
is torsion-free, and this implies the claim in Case (II).

Now suppose that we are in Case (I), i.e.~that $d \equiv 1 \, \mathrm{mod} \, 4$ and $m$ is odd.  Then $q_{d,m} = x^2 + xy + c y^2$.  In this case, the condition $q_{d,m} \circ A = q_{d,m}$ gives
\begin{align*}
q_{d,m} = & (\alpha x + \gamma y)^2 + (\alpha x + \gamma y)(\beta x + \delta y) + c (\beta x + \delta y)^2 = \\ & (\alpha^2 + \alpha \beta + c \beta^2)x^2 + (2 \alpha \gamma + \alpha \delta + \beta \gamma + 2c \beta \delta)xy + (\gamma^2 + \gamma \delta + c \delta^2) y^2.
\end{align*}
Hence we obtain
\begin{equation*}
\Ob_{d,m} = \mathrm{Spec} \, \Z[\alpha, \beta, \gamma, \delta, t] / (\alpha^2 + \alpha \beta + c \beta^2 - 1, 2 \alpha \gamma + \alpha \delta + \beta \gamma + 2c \beta \delta - 1, \gamma^2 + \gamma \delta + c \delta^2 - c, (\alpha \delta - \beta \gamma)t - 1).
\end{equation*}
By similar considerations to the previous case, the identity $c((\alpha \delta - \beta \gamma)^2  - 1) = 0$ holds in ${\Ob}_{d,m}$.  Set $u = 1 - \alpha \gamma - \beta \gamma - c \beta \delta$.  Then one of the equations cutting out $\Ob_{d,m}$ may be written as $2u = \alpha \delta - \beta \gamma + 1$.  Hence $c((2u - 1)^2 - 1) = 4c(u^2 - u) = 0$ in $\Ob_{d,m}$, and thus $u^2 - u = 0$ in $\widetilde{\Ob}_{d,m}$.  
Since the ring $\Z[\alpha, \beta, \gamma, \delta, t]/\mathcal{I}_{d,m}$ is torsion-free, the claim follows.
\end{proof}
\begin{cor} \label{cor:special.explicit}
Let $d \neq 0,1$ be square-free and $m \in \mathbb{N}$.  Maintaining the notation of Lemma~\ref{pro:od.tilde.explicit},  we have
$$
\Ob_{d,m}^+ = \begin{cases}
\mathrm{Spec} \, \Z[\alpha, \beta, \gamma, \delta, t] / (\mathcal{I}_d, u-1) &: \mathrm{Case} \, \mathrm{(I)} \\
\mathrm{Spec} \, \Z[\alpha, \beta, \gamma, \delta, t] / (\mathcal{I}_d, t - 1) &: \mathrm{Case} \, \mathrm{(II)}.
\end{cases}
$$
\end{cor}
\begin{proof}
In all cases, the identity $t = 1$ is immediate from the definition of $\Ob_{d,m}^+$.  In Case (I), this implies that the identity $2u - 1 = 1$ holds in $\Ob_{d,m}^+$, and hence so does $u = 1$.  We leave the verification that these rings are torsion-free to the reader.
\end{proof}
\begin{remark} \label{rmk:explicit.equality}
It is an instructive exercise to verify Lemma~\ref{N = O+} for the bases $\Omega_{d,m}$ by showing that the map $x \mapsto \alpha, y \mapsto \beta$ is an explicit isomorphism of commutative rings between the coordinate ring of $\un{N}_{d,m}$ and that of $\Ob_{d,m}^+$.
\end{remark}

With the previous results in hand, we can give an explicit proof of Proposition~\ref{prop:quotient}.

\begin{repeatprop}
Let $d \neq 0,1$ be square-free and $m \in \mathbb{N}$.  Then 
$$ \widetilde{\Ob}_{d,m} / \Ob_{d,m}^+ \simeq 
\begin{cases}
\un{\Z / 2} &: \mathrm{Case} \, \mathrm{(I)} \\
\un{\mu}_2 &: \mathrm{Case} \, \mathrm{(II)}.
\end{cases}
$$
\end{repeatprop}
\begin{proof}
The claim follows straightforwardly from the explicit presentations obtained in Lemma~\ref{pro:od.tilde.explicit} and Corollary~\ref{cor:special.explicit}.  In the course of the proofs of those statements, we showed that the relation $(\alpha \delta - \beta \gamma)^2 - 1 = 0$ always holds in $\widetilde{\Ob}_{d,m}$.  Hence the determinant induces a map $\det: \widetilde{\Ob}_{d,m} \to \un{\mu_2}$ corresponding to the ring homomorphism 
$ \Z [t] / (t^2 - 1) \to \Z[\alpha, \beta, \gamma, \delta, t] / \mathcal{I}_{d,m}$
sending $t$ to $\alpha \delta - \beta \gamma$.  

Now suppose we are in Case (II).  If $R$ is a $\Z$-algebra, then the sequence of groups
$$1 \to \Ob_{d,m}^+ (R) \to \widetilde{\Ob}_{d,m}(R) \stackrel{\det}{\to} \un{\mu}_2(R) \to 1$$
is exact.  Indeed, the only part of this statement not immediate from Lemma~\ref{pro:od.tilde.explicit} is the surjectivity of the determinant, obtained by observing that if $x \in \un{\mu}_2(R)$, then $\mathrm{diag}(x,1) \in \widetilde{\Ob}_{d,m}(R)$.

In Case (I), on the other hand, the determinant need not be surjective.  To see this, recall from the proof of Lemma~\ref{pro:od.tilde.explicit} that if $A \in \widetilde{\Ob}_{d,m}(R)$, then $\det A = 2u - 1$ for $u = 1 - \alpha \gamma  - \beta \gamma - c_{d,m} \beta \delta \in R$, following our usual notation for matrix elements.  Now consider the $\Z$-algebra $R = \Z [t] / (t^2 - 1)$; it is, in fact, faithfully flat over $\Z$ of finite presentation.  Observe that $t \in \un{\mu}_2(R)$, but there is no $u \in R$ such that $2u - 1 = t$.  Thus the determinant map $\widetilde{\Ob}_{d,m}(R) \to \un{\mu}_2(R)$ is not surjective.

Instead, consider the map $D: \widetilde{\Ob}_{d,m} \to \un{\Z / 2}$ corresponding to the ring homomorphism $\Z[t] / (t^2 - t) \to \Z[\alpha, \beta, \gamma, \delta, t] / \mathcal{I}_{d,m}$ sending $t$ to $1 - \alpha \gamma - \beta \gamma - c_{d,m} \beta \delta$.  This notation reflects that $D$ is the Dickson morphism $D_{q_{d,m}}$; cf.~\cite[(C.2.2)]{Con2}.  We claim that 
$$1 \to \Ob_{d,m}^+ (R) \to \widetilde{\Ob}_{d,m}(R) \stackrel{D}{\to} \un{\Z / 2}(R) \to 1$$ 
is an exact sequence of groups for any $\Z$-algebra $R$.  Indeed, if $x \in \un{\Z / 2}(R)$, then 
$$A_x = \left( \begin{array}{cc} 1 & 0 \\ 1 - x & 2x - 1 \end{array} \right) \in \widetilde{\Ob}_{d,m}(R)$$ 
satisfies $D(A_x) = x$, so $\psi$ is surjective.  It follows from Corollary~\ref{cor:special.explicit} that $\ker D = \Ob_{d,m}^+(R)$.
\end{proof}

\end{appendices}

\end{document}